\DeclareMathOperator{\N}{\mathbb{N}}
\DeclareMathOperator{\eps}{\varepsilon}
\DeclareMathOperator{\Bbar}{\overline{B}}
\DeclareMathOperator{\Cbar}{\overline{C}}
\DeclareMathOperator{\supp}{supp}
\DeclareMathOperator{\E}{\mathbb{E}}
\DeclareMathOperator{\R}{\mathbb{R}}
\DeclareMathOperator{\Prob}{\mathbb{P}}
\DeclareMathOperator{\ess}{ess}
\DeclareMathOperator{\fS}{\mathfrak{S}}
\DeclareMathOperator{\dimH}{\dim\sb{\mathrm{H}}}
\DeclareMathOperator{\dimB}{\dim\sb{\mathrm{B}}}
\DeclareMathOperator{\dimA}{\dim\sb{\mathrm{A}}}
\DeclareMathOperator{\dimqA}{\dim\sb{\,\mathrm{qA}}}
\DeclareMathOperator{\dimAt}{{\dim}_{\mathrm{A}}^{\,\theta}}
\DeclareMathOperator{\dimAp}{{\dim}_{\mathrm{A}}^{\phi}}
\DeclareMathOperator{\dimAf}{{\dim}_{\mathrm{A}}^{\,\varphi}}
\DeclareMathOperator{\fii}{\varphi}
\newtheorem{theorem}{Theorem}[section]
\newtheorem{corollary}[theorem]{Corollary}
\newtheorem{lemma}[theorem]{Lemma}
\newtheorem{remark}[theorem]{Remark}
\numberwithin{equation}{section}
\title{Assouad spectrum thresholds for some random constructions}
\author{Sascha Troscheit}
\thanks{The author was initially supported by NSERC Grants 2014-03154 and 2016-03719, and the
University of Waterloo.}
\address[Sascha Troscheit]{Faculty of Mathematics\\University of Vienna\\Oskar Morgenstern Platz~1\\1090 Wien\\Austria.}
\email{sascha.troscheit@univie.ac.at}
\urladdr{https://www.mat.univie.ac.at/~troscheit/}
\date{\today}
\begin{document}
\begin{abstract}
  The Assouad dimension of a metric space determines its extremal scaling properties. 
  The derived notion of the Assouad spectrum fixes relative scales by a scaling function to obtain
  interpolation behaviour between the quasi-Assouad and box-counting dimensions. While the quasi-Assouad
  and Assouad dimensions often coincide, they generally differ in random constructions. In this paper
  we consider a generalised Assouad spectrum that interpolates between the quasi-Assouad to the
  Assouad dimension. For common
  models of random fractal sets we obtain a dichotomy of its behaviour by finding a threshold function where
  the quasi-Assouad behaviour transitions to the Assouad dimension. This threshold can be considered
  a phase transition and we compute the threshold for the Gromov boundary of Galton-Watson
  trees and one-variable random
  self-similar and self-affine constructions. We describe how the stochastically self-similar model
  can be derived from the Galton-Watson tree result.
\end{abstract}
\subjclass[2010]{28A80, 37C45; 60J80}
\keywords{Assouad dimension, local complexity, Galton-Watson process, stochastic self-similarity}

\maketitle

\section{Introduction}
The Assouad dimension is an important notion in embedding theory due to the famous Assouad embedding
theorem \cite{assouadphd,Assouad79}, which implies that a metric space $X$ cannot be embedded by a
bi-Lipschitz map into $\R^d$ for any $d$ less than the Assouad dimension of $X$. 
The Assouad dimension is therefore a good indicator of thickness in a metric space and 
is an upper bound to most notions of dimensions
in use today~\cite{Fraser14,RobinsonBook}. In particular, it is an upper bound to the Hausdorff, box-counting, and packing
dimension. Heuristically, the Assouad dimension ``searches'' for the thickest part of a space
relative to two scales $0<r<R$ by finding the minimal exponent $s$ such that the every $R$-ball can be
covered by at most $(R/r)^s$ balls of diameter $r$.

Over the last few years much progress has been made towards our understanding of this dimension and
it
is now a crucial part of fractal geometry, see e.g.\
\cite{Chen19,Fraser14,Fraser14c,Garcia16,Kaenmaki16,Troscheit19} and references therein.
Several other notions of dimension were derived from its definition and this family of
Assouad-type dimensions has attracted much interest. An important notion is the $\theta$-\emph{Assouad spectrum}
introduced by Fraser and Yu \cite{Fraser16a} which aims to interpolate between the upper box-counting and the
Assouad dimension to give fine information on the structure of metric spaces, see
\cite{FraserSurvey} for a recent survey. It analyses sets by
fixing the relation $r = R^{1/\theta}$ in the definition of the Assouad dimension for parameters
$\theta\in(0,1)$.

It turns out that the Assouad spectrum interpolates between the upper box-counting dimension and the
quasi-Assouad dimension introduced by L\"u and Xi~\cite{Lu16}. That is, for $\theta\to 0$ the
$\theta$-Assouad spectrum tends to the upper box-counting dimension, whereas for $\theta\to 1$ it
approaches the quasi-Assouad dimension, see \cite{Spectrum}.
In fact, the quasi-Assouad could be defined in terms of the Assouad
spectrum. 

In many cases the quasi-Assouad dimension and Assouad dimension coincide and the Assouad spectrum
gives best relative scaling information. However, in many
stochastic settings they differ. This can be explained by the Assouad dimension picking up very
extreme behaviour that is almost surely lost over all geometric scales~\cite{Fraser14c}.

In their landmark paper \cite{Fraser16a}, Fraser and Yu discuss the possibility of 
extending the definition of the Assouad spectrum
to analyse the case when quasi-Assouad and Assouad dimension differ. These general spectra, which we
shall also refer to as \emph{generalised Assouad spectra}, would then shed some line on the
behaviour of `in-between' scales. This is done by changing the relation $r=R^{1/\theta}$ to a
general dimension function $r=\varphi(R)$. 
Garc\'{i}a, Hare and Mendivil studied this notion of spectrum (including their natural dual: the lower
Assouad dimension spectrum) and obtain similar interpolation results as are the case for the Assouad
spectrum, see \cite{Garcia19}. 
A common observation is that the intermediate spectrum is constant
and equal to either the quasi-Assouad or Assouad dimension around a \emph{threshold function}. That
is, there exists a function $\phi(x)$ such that $\dimAf F = \dimqA F$ for $\fii(x) =
\upomega(\phi(x))$ and $\dimAf F = \dimA F$ for $\fii(x) = o(\phi(x))$, where we have used the
standard little omega- and $o$-notation\footnote{A function is $f(x) = o(g(x))$ if $f/g\to 0$ as $x\to 0$.
  Similarly, $f(x) = \upomega(g(x))$ if
$g(x) = o(f(x))$.}.
The standard examples where quasi-Assouad and Assouad dimensions differ are random constructions and
this threshold can be considered a phase transition in the underlying stochastic process.
In this paper we will explore this threshold function for various random models.

Garc\'ia et al.~\cite{Garcia19a} considered the following random construction: Let $(l_i)$ be a
non-increasing sequence such that $\sum l_i = 1$. For each $i$, let $U_i$ be an i.i.d.\ copy of $U$,
the random variable that is uniformly distributed in $[0,1]$. Note that, almost surely $U_i \neq
U_j$ for all $i\neq j$. Therefore, almost surely, there is a total ordering of the $(U_i)$.
The complementary set of the random arrangement $E$ is defined as the complement of arranging open
intervals of length $l_i$ in the order induced by $(U_i)$. That is, 
\[
  E = \bigcup_{y\in [0,1]}\left\{ x=\sum_{V_y} l_i \;: \; V_y = \{i : U_i < y\}\right\}.
\]
Almost surely, this set in uncountable and has a Cantor-like structure. Garc\'ia et al.\ have
previously determined the (quasi-)Assouad dimension of deterministic realisations~\cite{Garcia16},
where the order is taken as $U_i < U_{i+1}$ as well as the Cantor arrangement, when $U_i$ is equal
to the right hand end point of the canonical construction intervals of the Cantor middle-third set (ignoring
repeats).
They confirmed in \cite{Garcia19a} that the quasi-Assouad dimension of $E$ is almost surely equal to the quasi-Assouad
dimension of the Cantor arrangement $C$, whereas the Assouad dimension takes the value $1$. They further
gave the threshold function, which they computed as $\phi(x) = \log|\log x|/|\log x|$.
\begin{theorem}[Garc\'ia et al.\ \cite{Garcia19a}]\label{thm:Garcia}
  Let $\phi(x) = \log|\log x|/|\log x|$ and $l_i$ be a decreasing sequence such that $\sum l_i =1$.
  Assume further that there exists  $\eps>0$ such that  
  \[\eps<\frac{\sum_{j\geq 2^{n+1}}l_j}{\sum_{j\geq 2^{n}}l_j}< 1-\eps.\]
  Then, almost surely, $\dimAf E = \dimAf C$ for $\fii=\upomega(\phi(x))$ and $\dimAf E = 1$ for $\fii
  = o(\phi(x))$.
\end{theorem}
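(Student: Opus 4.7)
The plan is to analyse, at each dyadic scale $R_n = 2^{-n}$, the local count of intervals of each length class inside a ball of radius $R_n$, compare with the deterministic Cantor arrangement $C$, and isolate the deviation as a logarithmic fluctuation controlled by Bernoulli concentration.

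First I would stratify the intervals into dyadic levels $L_k = \{I_j : 2^k \leq j < 2^{k+1}\}$. The balance hypothesis $\eps < s_{n+1}/s_n < 1-\eps$ (with $s_n = \sum_{j \geq 2^n} l_j$) ensures that the lengths within each level are comparable and that consecutive levels decrease in a bounded geometric ratio. Since the $U_i$ for $i \in L_k$ are i.i.d.\ uniform, for a fixed centre the number $M_k(x,R)$ of level-$k$ intervals meeting $B(x,R)$ is essentially Binomial with mean of order $R \cdot 2^k$. A Chernoff bound with a union bound over an $R_n$-grid of centres, all dyadic scales, and all relevant levels yields, almost surely for $n$ large,
\[
  M_k(x,R_n) \leq C\max\!\bigl(R_n\cdot 2^k,\ \lvert\log R_n\rvert\bigr)
\]
uniformly in $x$ and $k$. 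The first term reproduces the bulk count in $C$, while the second is the worst-case cluster produced by independent position fluctuations.

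For the upper bound when $\fii = \upomega(\phi)$, I sum $M_k$ over the $O(\lvert\log R_n\rvert)$ levels with $\ell_k \in [\fii(R_n), R_n]$; the bulk contributes at most $C(R_n/\fii(R_n))^{\dimAf C}$, while the log-excess contributes at most $O(\lvert\log R_n\rvert^2)$. Using $\phi(R) = \log\lvert\log R\rvert/\lvert\log R\rvert$, a short calculation shows the excess is absorbed into the bulk precisely when $\fii = \upomega(\phi)$, giving $\dimAf E \leq \dimAf C$; the reverse inequality follows by transplanting bulk clusters of $C$ into $E$ using comparability of lengths within each level. For the lower bound when $\fii = o(\phi)$ I would apply a Paley--Zygmund / second-moment argument to the event that some ball of radius $R_n$ contains at least a fixed constant times $\lvert\log R_n\rvert$ intervals from the level $k$ with $\ell_k$ comparable to $\fii(R_n)$. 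Each scale supplies this event with probability bounded below; passing to a sufficiently sparse dyadic subsequence $(R_{n_j})$ restores approximate independence so that Borel--Cantelli yields infinitely many such clusters almost surely, forcing $\dimAf E = 1$.

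The main obstacle is the sharp matching at the threshold. The union-bound cost on the upper side and the Paley--Zygmund deviation on the lower side must both produce exactly one power of $\lvert\log R_n\rvert$ together with the same logarithmic factor for the level count; any slack shifts the critical rate by a double-logarithmic factor, and the substantial gap between $\dimAf C$ and $1$ leaves no room for such slack. A secondary difficulty is the near-independence required for Borel--Cantelli: a single random ordering of the $(U_i)$ determines the geometry at every scale simultaneously, so one must either quantify the mixing along a well-separated subsequence of scales or replace Borel--Cantelli by a second-moment argument tuned to this coupling.
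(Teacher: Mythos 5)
First, a point of framing: Theorem~\ref{thm:Garcia} is \emph{not proved in this paper}. It is quoted from Garc\'ia, Hare and Mendivil \cite{Garcia19a} as motivation for the author's own Theorems~\ref{thm:GWupper}--\ref{thm:BedfordMcMullen}, so there is no in-paper proof to compare your attempt against. Judged on its own terms, your sketch has the right overall shape (concentration plus union bound in one direction, a clustering event plus Borel--Cantelli in the other), but the lower-bound step as written cannot work.

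The event you propose for $\fii=o(\phi)$ --- a ball $B(x,R_n)$ containing at least $c\lvert\log R_n\rvert$ intervals from the level with lengths $\sim r_n = R_n^{1+\fii(R_n)}$ --- is vacuous in precisely this regime. Since $R_n/r_n = R_n^{-\fii(R_n)} = \exp\bigl(\fii(R_n)\lvert\log R_n\rvert\bigr)$ and $\fii=o(\phi)$ forces $\fii(R_n)\lvert\log R_n\rvert = o(\log\lvert\log R_n\rvert)$, one has $R_n/r_n = \lvert\log R_n\rvert^{o(1)} \ll \lvert\log R_n\rvert$. At most $O(R_n/r_n)$ intervals of length $\gtrsim r_n$ fit inside $B(x,R_n)$, so your event never occurs. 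The correct target is a \emph{near-saturating run}: roughly $K\sim R_n/r_n$ consecutive intervals of length comparable to $r_n$, with no longer interval interleaved, so that $N_{r_n}(B(x,R_n)\cap E)\gtrsim R_n/r_n$ and hence $N_{r_n}/(R_n/r_n)^s=(R_n/r_n)^{1-s}\to\infty$ for every $s<1$. Such a run has probability of order $p^{K}$ with $p\in(\eps,1-\eps)$ coming from the balance hypothesis, and demanding that the expected number of runs at scale $r_n$ diverge gives $K\log(1/p)\lesssim\lvert\log r_n\rvert\sim\lvert\log R_n\rvert$, i.e.\ $R_n/r_n\lesssim\lvert\log R_n\rvert$, which is exactly where $\phi(x)=\log\lvert\log x\rvert/\lvert\log x\rvert$ enters. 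This is also incompatible with your assertion that the event ``has probability bounded below'' at each fixed scale: $p^{K}\to 0$, and one only recovers a uniform lower bound after taking a union over the $\sim r_n^{-\alpha}$ candidate positions.

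Two further gaps. You repeatedly treat $\fii(R_n)$ as a length (``$\ell_k$ comparable to $\fii(R_n)$'', ``levels with $\ell_k\in[\fii(R_n),R_n]$''); it is an exponent, and the relevant inner scale is $r_n=R_n^{1+\fii(R_n)}$. More substantively, the ``essentially Binomial'' description of $M_k(x,R)$ and the claim that ``a sparse dyadic subsequence restores approximate independence'' both ignore that a single random total order of $(U_i)$ determines the geometry at every scale simultaneously: the position of $I_j$ is $\sum_{U_i<U_j}l_i$, and these sums share almost all their terms. The correlations do not vanish along a subsequence; they must be estimated. This is exactly why the one-variable arguments in Sections~\ref{sect:GWProofs}--\ref{sect:OneSimProof} replace the second Borel--Cantelli lemma by the quantitative Sprind\v{z}uk-type version (Theorem~\ref{thm:kleinbock}), bounding $\sum\mathcal{R}_{n,m}$ explicitly. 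Finally, the reverse inequality $\dimAf E\geq\dimAf C$ in the $\fii=\upomega(\phi)$ regime is asserted (``transplanting bulk clusters'') but not argued; it requires a law-of-large-numbers statement showing $E$ typically realises the same level-by-level counting as $C$, and deserves its own paragraph.
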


In this article we give elementary proofs of the threshold dimension functions for several
canonical random sets.
Under separation conditions we obtain the threshold for one-variable random iterated function
systems with self-similar maps and self-affine maps of Bedford-McMullen type. We also determine the
threshold for the Gromov boundary of Galton-Watson trees. While we do not state it explicitly, using the methods
found in \cite{Troscheit19}, our results for Galton-Watson trees directly applies to stochastically self-similar and
self-conformal sets as well as fractal percolation sets. 

Our proofs rely on the theory of large deviations as well as a
dynamical version of the Borel-Cantelli lemmas.

\section{Definitions and Results}
Let $\phi:\R^+\to\R^+$. We say that $\phi$ is a \emph{dimension function} if $\phi(x)$ and
$\phi(x)|\log x|$
are monotone. Let $N_r(X)$ be the
minimal number of sets of radius at most $r$ needed to cover $X$. The generalised
Assouad spectrum (or intermediate Assouad spectrum) with respect to $\phi$ is given by 
\begin{multline*}
  \dimAp F = \inf\bigg\{s\;:\; (\exists C>0)(\forall 0< r=R^{1+\phi(R)} < R <1) \\\sup_{x\in
  F}N_r(F\cap B(x,R)) \leq C \left(\frac{R}{r}\right)^s = R^{-\phi(R)s}\bigg\}.
\end{multline*}
We will also refer to this quantity as the $\phi$-Assouad dimension of $F$.
Many other variants of the Assouad dimension can now be obtained by restricting $\phi$ in some way:
The Assouad spectrum $\dimAt$ considered by Fraser and Yu 
can be obtained by setting $\phi(R)=1/\theta-1$. The quasi-Assouad dimension $\dimqA F$ is given
by the limit $\dimqA F = \lim_{\theta\to1}\dimAt F$, whereas the Assouad dimension is
obtained by letting $\phi(R)=0$ and allowing $r\leq R$.
We note that we define the generalised Assouad spectrum slightly differently than in
\cite{Fraser16a}.
Instead of requiring $r = \varphi(R)$, we consider the dimension function $\phi$ setting $r=
R^{1+\phi(R)}$. Hence, $\varphi(R)/R = R^{\phi(R)}$. We use this notation as it is slightly more
convenient to use.

\vskip.5em
In fractal geometry two canonical models are used to obtain random fractal sets: stochastically
self-similar sets and one-variable random sets. Instead of computing the result for stochastically
self-similar sets, we compute it for Galton-Watson processes from which the stochastically
self-similar case will follow. 
We will then move on to one-variable random constructions and analyse random self-similar
constructions, as well as a
randomisation of Bedford-McMullen carpets.

\subsection{Galton-Watson trees and stochastically self-similar sets}
Let $X$ be a random variable that takes values in $\{0,\cdots,N\}$ and write $\theta_j =
\Prob\{X = j\}$ for the probability that $X$ takes value $j$.  The Galton-Watson process $Z_i$ is
defined inductively by letting $Z_0=1$ and $Z_{k+1} = \sum_{j=1}^{Z_k} X_j$, where each summand
$X_j$ is an i.i.d.\ copy of $X$. 
The Galton-Watson tree is obtained by considering a tree with single root and determining ancestors
for every node with law $X$, independent of all other nodes. The number of nodes at level $k$ is
then given by $Z_k$ and, conditioned on non-extinction, this process generates a (random) infinite
tree. We endow the set of infinite descending paths starting at the root 
with the standard metric $d(x,y)=2^{-\lvert x\wedge y\rvert}$, where $\lvert x \wedge y\rvert$ is the
level of the least common ancestor. This gives rise to the Gromov boundary of the random
tree that we refer to as $F_\tau$. 

Throughout, we assume that we are in the
supercritical case, i.e.
\[
  m=\E(X)=\sum_{k=1}^N \theta_k k>1.\]
The normalised Galton-Watson process is defined by $W_k = Z_k / m^k$. It is a standard application
of the martingale convergence theorem to show that $W_k\to W$ almost surely. Conditioned on
non-extinction we additionally have $W\in(0,\infty)$ almost surely.
We refer the reader to \cite{Liu00} and \cite{Lyons95} for some other fundamental dimension theoretic
results of Galton-Watson processes.

It was established in \cite{Fraser14c} that the Gromov boundary, using the standard metric, has
Assouad dimension $\log N / \log 2$ almost surely.
In \cite{Troscheit19}, the quasi-Assouad dimension was computed as 
\begin{equation*}
  \dimqA F_\tau = \frac{\log\E(X)}{\log 2} = \dimB F_\tau = \dimH F_\tau
\end{equation*}
almost surely.
This means, in particular, that the $\theta$-Assouad spectrum is constant and equal to the Hausdorff
dimension\footnote{In all models considered in this paper (except the self-affine construction), the Hausdorff and box-counting dimensions
coincide almost surely and all instances of $\dimB$ may be replaced by $\dimH$.}. This is in fact the typical behaviour for all dimension
functions $\phi(x) \leq C (\log|\log x|/|\log x|)$, where $C>0$ is some constant. Conversely, for
$\phi(x) = \upomega
(\log|\log x|)/|\log x|)$ we recover the Assouad dimension, c.f.\ Theorem~\ref{thm:Garcia}. 

\begin{theorem}\label{thm:GWupper}
  Let $F_\tau$ be the Gromov boundary of a supercritical Galton-Watson tree with bounded
  offspring distribution. Let 
  \begin{equation*}
    \phi(x) >  \frac{\log|\log x|}{|\log x|}(\varepsilon\log m)^{-1}
  \end{equation*}
  be a dimension function.
  Then 
  \begin{equation*}
    \dimAp F_\tau \leq (1+\varepsilon) \dimB F_\tau
  \end{equation*}
  almost surely.
\end{theorem}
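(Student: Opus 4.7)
The plan is to reduce the statement to a uniform counting estimate on descendants in the tree, and then close that estimate by a first-moment / Borel-Cantelli argument powered by large deviations for the Galton-Watson process. In the tree metric $d(x,y) = 2^{-|x \wedge y|}$, the ball $B(x,R)$ with $R = 2^{-k}$ is exactly the set of infinite descending paths through the level-$k$ ancestor $v$ of $x$. For $r = R^{1+\phi(R)}$, set $n = \lceil k\phi(R) \rceil$, so that $r \approx 2^{-(k+n)}$. Then $N_r(F_\tau \cap B(x,R)) \leq Z_n^v$, the number of level-$(k+n)$ descendants of $v$. Writing $s = (1+\varepsilon) \log m/\log 2 = (1+\varepsilon) \dimB F_\tau$, one computes $(R/r)^s = m^{(1+\varepsilon) n}$, so it suffices to show that almost surely, for every sufficiently small $R = 2^{-k}$,
\[ \max_{v \in \text{level } k} Z_n^v \leq C\, m^{(1+\varepsilon) n}. \]

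Second, I would extract this bound from a strong tail estimate on $Z_n$. Because the offspring distribution is bounded, the Kesten-Stigum martingale $W_n = Z_n/m^n$ lies in $L^p$ uniformly in $n$ for every $p$, and its limit $W$ enjoys super-polynomial (stretched-exponential) tails of Bingham type. Standard large-deviation theory for supercritical Galton-Watson processes (iterating the PGF identity $G_{n+1} = G \circ G_n$, or converting the tail of $W$) produces
\[ \Prob\bigl[Z_n > m^{(1+\varepsilon) n}\bigr] \leq \exp\bigl(-c(\varepsilon)\, m^{\varepsilon' n}\bigr) \]
for some $\varepsilon' > 0$. Letting $A_k$ be the event $\{\max_{v \in \text{level } k} Z_n^v > m^{(1+\varepsilon) n}\}$, the branching property and Markov's inequality yield
\[ \Prob[A_k] \leq \E[Z_k]\,\Prob\bigl[Z_n > m^{(1+\varepsilon) n}\bigr] = m^k \exp\bigl(-c\, m^{\varepsilon' n}\bigr). \]
The hypothesis on $\phi$ evaluated at $R = 2^{-k}$ forces $n \geq (\varepsilon \log m \log 2)^{-1} \log k$ up to constants, so $m^{\varepsilon' n}$ exceeds a positive power of $k$ and $\sum_k \Prob[A_k] < \infty$. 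Borel-Cantelli then gives that only finitely many $A_k$ occur almost surely, and the exceptional levels are absorbed into the constant $C$ in the definition of $\dimAp$.

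The main obstacle is the mismatch between the extremely mild logarithmic growth $n \gtrsim \log k$ supplied by the hypothesis and the exponential factor $\E[Z_k] = m^k$ in the union bound: polynomial moments of $W$ only yield $\Prob[Z_n > m^{(1+\varepsilon) n}] \leq C_p m^{-\varepsilon n p}$, which translates to $k^{-O(p)}$ under the logarithmic growth of $n$ and is nowhere near killing $m^k$. Genuinely super-polynomial tails for $Z_n$ are thus indispensable, and the delicate step is to extract the correct large-deviation rate from the bounded-offspring structure and match its exponent $\varepsilon'$ to the constant $(\varepsilon \log m)^{-1}$ in the hypothesis so that the Borel-Cantelli sum converges.
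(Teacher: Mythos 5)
Your proposal is correct and follows essentially the same route as the paper: the Chernoff-type tail bound $\Prob(Z_n > m^{(1+\varepsilon)n}) \lesssim \exp(-\tau m^{\varepsilon n})$ (the paper's Lemma~\ref{thm:upperGWP}, obtained from the uniform exponential moment of $W_n$), followed by a union bound over the $\sim m^k$ level-$k$ nodes, convergence of the resulting series once the hypothesis on $\phi$ forces $m^{\varepsilon n}$ to exceed a power of $k$ larger than one, and Borel--Cantelli. The only cosmetic difference is that you invoke Markov's inequality on $\E[Z_k]$ while the paper first pins $Z_k \leq Am^k$ almost surely and then takes a union bound, but these are the same first-moment estimate in different clothing.
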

Note that we trivially have $\dimAp F \geq \dimB F$ and so
\begin{corollary}
   Let $\phi(x) = \upomega(\log|\log x|/|\log x|)$ be a dimension function.
  Then $\dimAp F_\tau =  \dimB F_\tau = \dimH F_\tau$
  almost surely.
\end{corollary}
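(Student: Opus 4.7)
The plan is to derive this corollary as a straightforward consequence of Theorem~\ref{thm:GWupper}, the trivial lower bound $\dimAp F_\tau \geq \dimB F_\tau$ already recorded in the remark preceding the statement, and the identity $\dimB F_\tau = \dimH F_\tau$ noted earlier in this section. The key observation is that the hypothesis $\phi(x) = \upomega(\log|\log x|/|\log x|)$ is tailored exactly to ensure that the pointwise bound required by Theorem~\ref{thm:GWupper} is satisfied for every $\varepsilon > 0$.

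Concretely, I would fix an arbitrary $\varepsilon > 0$ and set $C = (\varepsilon \log m)^{-1}$. By the definition of little-omega notation recalled in the footnote, there exists some $\delta > 0$ such that
\[
  \phi(x) \geq C \cdot \frac{\log|\log x|}{|\log x|} \qquad \text{for all } x \in (0,\delta).
\]
The quantity $\dimAp F_\tau$ depends only on the behaviour of the covering function $N_r(F_\tau \cap B(x,R))$ at arbitrarily small scales $R$, so modifying $\phi$ on $[\delta,1)$ merely affects the constant prefactor $C$ in the defining inequality and leaves the infimum over exponents $s$ unchanged. Hence Theorem~\ref{thm:GWupper} is applicable and yields $\dimAp F_\tau \leq (1+\varepsilon)\dimB F_\tau$ on an almost-sure event $\Omega_\varepsilon$.

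To conclude, I would take $\Omega_\infty = \bigcap_{n\in\N} \Omega_{1/n}$, which still has full probability. On $\Omega_\infty$ the inequality $\dimAp F_\tau \leq (1+1/n)\dimB F_\tau$ holds for every $n$, so letting $n \to \infty$ gives $\dimAp F_\tau \leq \dimB F_\tau$. Combining this with the trivial lower bound and the earlier identification $\dimB F_\tau = \dimH F_\tau$ delivers the claimed chain of equalities. The only subtlety I foresee is the verification that Theorem~\ref{thm:GWupper} tolerates the replacement of its pointwise hypothesis by an eventual one; this is a routine observation once one notes that the definition of $\dimAp$ is insensitive to finite-range perturbations of $\phi$, and that the assumption that $\phi$ is a dimension function transfers to its restriction to $(0,\delta)$ without modification.
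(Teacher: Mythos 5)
Your proof is correct and takes essentially the same approach as the paper, which records the trivial lower bound $\dimAp F_\tau \geq \dimB F_\tau$ immediately before the corollary and leaves the rest implicit. You have correctly supplied the standard details the paper omits: passing from the eventual inequality $\phi(x) > (\varepsilon\log m)^{-1}\log|\log x|/|\log x|$ furnished by the $\upomega$-hypothesis to the hypothesis of Theorem~\ref{thm:GWupper} (which is legitimate since $\dimAp$ is insensitive to the behaviour of $\phi$ at scales bounded away from zero), taking a countable intersection over $\varepsilon = 1/n$ to send $\varepsilon \to 0$ while preserving full probability, and invoking the almost-sure identity $\dimB F_\tau = \dimH F_\tau$ recalled earlier in the section.
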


For the reverse direction we obtain the following bound.
\begin{theorem}
  \label{thm:bottomGW}Let $F_\tau$ be the Gromov boundary of a supercritical Galton-Watson
  tree with bounded
  offspring distribution. Let $N$ be the maximal integer s.t.\ $\theta_N>0$. Further, let
  \begin{equation}
    \phi(x) <  \frac{\log|\log x|}{|\log x|}\frac{\log(N/m)}{\varepsilon \log m \log
    N}.\label{eq:condupper}
  \end{equation}
  be a dimension function. Then, almost surely, 
  \begin{equation*}
    \dimAp F_\tau \geq \min\{(1+\varepsilon) \dimB F_\tau ,\dimA F_\tau\}.
  \end{equation*}
  almost surely.
\end{theorem}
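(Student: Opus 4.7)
The plan is to recast the problem in purely tree-theoretic terms and then combine a large-deviation estimate with Borel-Cantelli. Since the Gromov metric satisfies $d(x,y)=2^{-|x\wedge y|}$, every ball $B(x,2^{-n})\cap F_\tau$ is exactly the boundary of the subtree rooted at the level-$n$ ancestor $v$ of $x$, and the cover number $N_{2^{-k}}(F_\tau\cap B(x,2^{-n}))$ equals the number of level-$(k-n)$ descendants of $v$. Writing $L=\lceil n\phi(2^{-n})\rceil$, the lower bound reduces to showing that, for every $s$ strictly less than $\min\{(1+\varepsilon)\dimB F_\tau,\dimA F_\tau\}$, there exist arbitrarily large $n$ with a level-$n$ vertex $v$ whose subtree satisfies $Z_L^v\geq 2^{sL}$. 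In the Assouad-dominated regime $(1+\varepsilon)\log m\geq\log N$ this requirement collapses to finding a fully $N$-ary subtree of depth $L$ at some level-$n$ vertex.

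To produce such a fat subtree I would force the first $L_0\leq L$ generations below $v$ to be fully $N$-ary---an event of probability at least $\theta_N^{(N^{L_0}-1)/(N-1)}\geq\exp(-c_0 N^{L_0})$---and allow the remaining $L-L_0$ generations to evolve as independent Galton-Watson subtrees. Then $Z_L^v$ is a sum of $N^{L_0}$ i.i.d.\ copies of $Z_{L-L_0}$ with total mean $N^{L_0}m^{L-L_0}$. Choosing $L_0\approx L\varepsilon\log m/\log(N/m)$ in the $(1+\varepsilon)\dimB$ regime and $L_0=L$ in the Assouad regime positions the mean safely above $2^{sL}$, and a Paley-Zygmund-type inequality (or a direct concentration argument using the bounded offspring assumption) gives $p_L:=\Prob(Z_L^v\geq 2^{sL})\geq c_1\exp(-c_0 N^{L_0})$.

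Conditional on non-extinction, the branching Markov property makes the subtrees at distinct level-$n$ vertices independent Galton-Watson trees, while $Z_n\sim Wm^n$ almost surely with $W>0$. Hence
\[
  \Prob\bigl(\text{no good vertex at level }n\bigr)\leq\exp\bigl(-c_2 W m^n\,p_L\bigr).
\]
The hypothesis $\phi(x)<\frac{\log|\log x|}{|\log x|}\cdot\frac{\log(N/m)}{\varepsilon\log m\log N}$ is calibrated, via the strict inequality, to force $L_0\leq(1-\delta)\log n/\log N$ for some fixed $\delta>0$; in particular $N^{L_0}=O(n^{1-\delta})$, so $Wm^n p_L\to\infty$ exponentially. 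A Borel-Cantelli argument along a sparse dyadic subsequence of levels---on which subtrees are independent by the Markov property---then yields the fat-subtree event almost surely for infinitely many $n$, and hence the claimed lower bound on $\dimAp F_\tau$.

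The main obstacle is the large-deviation bookkeeping: the exponent $c_0 N^{L_0}$ in the lower bound for $p_L$ must be strictly dominated by $n\log m$, which is precisely what the strict inequality in the hypothesis buys. Related technical points are keeping the Paley-Zygmund constants uniform in $L$ as $L_0$ grows logarithmically in $n$, and cleanly managing the transition between the two regimes of the minimum at $(1+\varepsilon)\log m=\log N$; in each regime the choice of $L_0$ and the resulting summability of the Borel-Cantelli tail are routine once the exponent in $p_L$ is pinned down.
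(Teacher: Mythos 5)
Your proposal takes essentially the same route as the paper: force the top $l\approx\varepsilon L\log m/\log(N/m)$ generations beneath a level-$n$ vertex to be fully $N$-ary (probability $\exp(-cN^{l})$), let the remaining generations grow at rate $m$, exploit the $\Theta(m^n)$ independent trials at level $n$ to get a per-level success probability bounded away from $0$, and apply Borel--Cantelli along a sparse sequence of levels with disjoint scale windows. The only real difference is cosmetic: you apply Paley--Zygmund to the aggregate $Z_L^v=\sum_{i=1}^{N^{l}}Z_{L-l}^{(i)}$, whereas the paper asks each of the $N^{l}$ subtrees separately to exceed $m^{L-l}$ using the martingale-limit bound $\Prob(Z_k>m^k)\geq q$, which costs an extra harmless factor $q^{N^{l}}$; both are absorbed into the same $\exp(-cN^{l})$ bound.
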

Assume that $m<N$ and so $\dimB F_\tau < \dimA F_\tau$ almost surely.
Then, for $\varepsilon$ satisfying $(1+\varepsilon)=\dimA F / \dimB F=\log
N/m$, we obtain $\dimAp F_\tau \geq \dimA F_\tau$ almost surely. It follows that for 
\[
  \phi(x) < \frac{\log|\log x|}{\log(1/x)}\frac{\log (N/m)}{(N/m)\log m\log N}
  =\frac{m}{N}\left( \frac{1}{\log m}-\frac{1}{\log N}\right) \frac{\log|\log x|}{|\log x|}
\]
we get $\dimAp F = \dimA F$.
\begin{corollary}
   There exists $C>0$ such that, almost surely, $\dimAp F_\tau =  \dimA F_\tau$
 for all dimension functions $\phi(x) \leq C\log|\log x|/|\log x|)$.
\end{corollary}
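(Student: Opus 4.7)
The plan is to extract the corollary directly from Theorem~\ref{thm:bottomGW} by making a canonical choice of $\varepsilon$. The upper bound $\dimAp F_\tau \leq \dimA F_\tau$ is automatic, since the Assouad dimension corresponds to imposing no constraint relating the two scales and hence dominates every generalised Assouad spectrum. Only the reverse inequality needs attention, and I would address it in two cases.

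First I would dispatch the degenerate case $m = N$: here the tree is $N$-ary almost surely, so $\dimB F_\tau = \dimA F_\tau$ and any $C > 0$ satisfies the claim. Assume therefore $m < N$. Theorem~\ref{thm:bottomGW} produces, for each $\varepsilon > 0$, a positive constant $c(\varepsilon)$ such that $\phi(x) < c(\varepsilon)\log|\log x|/|\log x|$ forces
\[
  \dimAp F_\tau \geq \min\{(1+\varepsilon)\dimB F_\tau,\,\dimA F_\tau\}
\]
almost surely. I would then choose $\varepsilon$ just large enough that $(1+\varepsilon)\dimB F_\tau \geq \dimA F_\tau$; using the almost sure identities $\dimB F_\tau = \log m/\log 2$ and $\dimA F_\tau = \log N/\log 2$, the threshold value is $\varepsilon_0 = \log(N/m)/\log m$. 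With this $\varepsilon_0$ fixed, the minimum collapses to $\dimA F_\tau$ and the condition on $\phi$ reduces to an inequality of the form $\phi(x) \leq C\log|\log x|/|\log x|$ with a positive $C = C(m,N)$, as in the explicit calculation displayed immediately before the corollary.

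Since Theorem~\ref{thm:bottomGW} already contains all the probabilistic content, there is no substantial obstacle: the argument is really bookkeeping. The only items to verify are that the almost sure events for $\dimB F_\tau$ and $\dimA F_\tau$ both hold on the non-extinction event (they do, by standard Galton--Watson results) and that the resulting $C$ is strictly positive in the regime $m < N$ (clear from $\log(N/m) > 0$). Taking $C$ to be this constant, or any smaller positive number, completes the proof.
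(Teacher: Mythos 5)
Your argument is correct and mirrors the paper's own derivation, which appears in the short passage immediately preceding the corollary: invoke Theorem~\ref{thm:bottomGW}, choose $\varepsilon_0$ so that $(1+\varepsilon_0)\dimB F_\tau = \dimA F_\tau$, and read off the positive constant from \eqref{eq:condupper}. The one point you make explicit that the paper leaves implicit is the degenerate case $m=N$, where $\Prob(X=N)=1$ forces $\dimB F_\tau = \dimA F_\tau$ and the claim is vacuous; this is a reasonable tidying-up but not a substantive difference.
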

We will prove both theorems in Section \ref{sect:GWProofs}.
These two bounds are not optimal in the sense that there is a slight gap between the upper and the
lower bound. This gap, after rearranging, is of order
$1-\log m / \log N = 1- \dimB F / \dimA F$.
Let $\phi_\varepsilon$ be equal to the right hand side of \eqref{eq:condupper}. 
We can combine Theorems \ref{thm:GWupper} and \ref{thm:bottomGW} to give
\[
  (1+\varepsilon)\dimB F \leq \dim_{\textrm{A}}^{\phi_\varepsilon} F \leq (1+\varepsilon)\frac{\dimA F}{\dimA
  F - \dimB F}\dimB F
  \]
for an appropriate range of $\varepsilon > 0$.

\subsubsection{Stochastically self-similar sets}
This result can also be applied in the setting of stochastically self-similar sets that were first
studied by Falconer \cite{Falconer86} and Graf \cite{Graf87}. Since we do not exclude the case where
there is no descendant, the analysis also applies to fractal percolation in the sense of Falconer and
Jin~\cite{Falconer15}.
In the case of Mandelbrot percolation, where a $d$-dimensional cube is split into $n^d$ equal
subcubes of sidelength $1/n$ and is kept with probability $p>0$, the number of subcubes is a
Galton-Watson process and the surviving subcubes at level $k$ can be modelled by a Galton-Watson
tree. Since subcubes at the same level have the same diameters, the limit set is almost surely
bi-Lipschitz to the Gromov boundary of an appropriately set up Galton-Watson tree with the small
caveat that the graph metric need to be changed from $d(x,y)=2^{-|x\wedge y|}$ to
$d'(x,y)=n^{|-x\wedge y|}$. This change, however, just affects the results above by a constant and
not their asymptotic behaviour.
For non-homogeneous self-similar sets, where the size may vary at a given generation, one needs to
set up a Galton-Watson tree that models the set. This is described in full detail in
\cite{Troscheit19} and we omit its full derivation for this model here. Using those methods,
Theorems \ref{thm:GWupper} and \ref{thm:bottomGW} become the following corollary.
\begin{corollary}
  Let $F_\tau$ be a stochastically self-similar set arising from finitely many self-similar IFS that
  satisfy the uniform open set condition. Then, if $\phi(x) = \upomega (\log|\log x|/|\log x|)$, we
  obtain $\dimAp F_\tau = \dimB F_\tau$ almost surely.
  Conversely, if $\phi(x) = o(\log|\log x|/|\log x|)$, then $\dimAp F_\tau = \dimA F_\tau$.
\end{corollary}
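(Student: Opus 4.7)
The plan is to reduce the corollary to Theorems~\ref{thm:GWupper} and~\ref{thm:bottomGW} by encoding the stochastically self-similar set $F_\tau$ as the image, under a coding map, of the Gromov boundary of a suitably constructed Galton-Watson tree. In the stochastically self-similar model, at each step a node is replaced by a realisation of one of the finitely many IFS (chosen independently according to a fixed distribution), so the associated random tree has bounded branching and the hypothesis of bounded offspring in the preceding theorems is met. Every node $v$ corresponds to a cylinder in $F_\tau$ of diameter comparable to the product of contraction ratios along the path from the root to $v$, and the uniform open set condition guarantees that covering numbers in $F_\tau$ at scale $r$ are comparable to the counts of cylinders whose diameter first falls below $r$.

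Next I would handle the change from the intrinsic ultrametric $d(x,y) = 2^{-|x\wedge y|}$ on the boundary to the Euclidean metric on $F_\tau$. Because contraction ratios are bounded away from $0$ and $1$, a stopping-time construction produces a new Galton-Watson process indexed by geometric (rather than combinatorial) generations, with a bounded offspring distribution. A law-of-large-numbers argument shows that the number of geometric generations required to reach scale $r$ is, almost surely, within a multiplicative constant of $|\log r|$. This multiplicative constant is absorbed without changing the order of the threshold $\log|\log r|/|\log r|$, so a dimension function $\phi$ that is $\upomega$ (respectively, $o$) of the threshold retains this property under the reparametrisation.

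The main obstacle is the careful execution of this stopping-time reduction: one must verify that the induced offspring law has finite mean, that the normalised martingale limit still converges to a positive finite random variable almost surely conditional on non-extinction, and that the box-counting and Assouad dimensions of $F_\tau$ are preserved under the coding. All of these issues are treated in detail in \cite{Troscheit19}, and we invoke the framework developed there rather than reproduce it.

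With the reduction in place, the first assertion follows from Theorem~\ref{thm:GWupper}: for any $\varepsilon > 0$ the assumption $\phi = \upomega(\log|\log x|/|\log x|)$ satisfies the hypothesis, giving $\dimAp F_\tau \leq (1+\varepsilon)\dimB F_\tau$, and letting $\varepsilon \to 0$ together with the trivial lower bound yields $\dimAp F_\tau = \dimB F_\tau$. For the second assertion, if $\dimB F_\tau = \dimA F_\tau$ the claim follows from the first part; otherwise, as in the discussion following Theorem~\ref{thm:bottomGW}, one selects $\varepsilon$ with $(1+\varepsilon)\dimB F_\tau \geq \dimA F_\tau$, which shows that the permitted $\phi$ is a constant multiple of $\log|\log x|/|\log x|$. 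Thus for any $\phi = o(\log|\log x|/|\log x|)$ we obtain $\dimAp F_\tau \geq \dimA F_\tau$, and the trivial reverse inequality completes the proof.
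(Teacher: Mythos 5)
Your proof is correct and takes essentially the same route as the paper: both reduce the stochastically self-similar set to a suitably constructed Galton-Watson tree via the framework of \cite{Troscheit19}, note that passing from combinatorial to geometric generations changes $|\log r|$ only by a bounded multiplicative factor so the $\upomega$ and $o$ conditions on $\phi$ survive the reparametrisation, and then feed the result into Theorems~\ref{thm:GWupper} and~\ref{thm:bottomGW}. One small slip: in the second assertion, when $\dimB F_\tau = \dimA F_\tau$ you say the claim follows from the first part, but the first part assumes $\phi = \upomega(\log|\log x|/|\log x|)$ while here $\phi = o(\log|\log x|/|\log x|)$; the correct (and simpler) reason is that $\dimB F_\tau \leq \dimAp F_\tau \leq \dimA F_\tau$ holds always, so equality of the outer bounds forces the middle.
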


\subsection{One-variable random sets}
A different popular model for random fractal sets is the one-variable model. It is sometimes also referred to
as a homogeneously random construction. 
We will avoid the latter term to avoid ambiguity with homogeneous iterated function systems.
Let $\Lambda\subset \R^n$ be a compact set supporting the Borel-probability measure $\mu$. For each
$\lambda\in\Lambda$ we associate an iterated function system $\mathbb{I}_\lambda=
\{f^\lambda_1,\dots ,f^\lambda_{N_\lambda}\}$, where each $f_i^\lambda$ is a strictly contracting
diffeomorphism on some non-empty open set $V$.
Throughout this section we make the standing assumption that $\sup_\lambda N_\lambda < \infty$,
that $0<\inf_{\lambda,i,x}\lvert(f_i^\lambda)'(x)\rvert
\leq \sup_{\lambda,i,x}\lvert(f_i^\lambda)'(x)\rvert <1$, and that there exists a non-empty compact set
$\Delta\subset V$ such that
$f_i^\lambda(\Delta)\subseteq \Delta$ for all $\lambda\in\Lambda$ and $1\leq i \leq N_\lambda$. 
To each $\omega\in\Omega=\Lambda^{\N}$, we associate the set $F_\omega$ given by
\begin{equation*}
  F_\omega = \bigcap_{k=1}^\infty\; \bigcup_{\substack{1\leq i_j \leq N_{\lambda_j}\\1\leq j \leq
  k}} f_{i_k}^{\lambda_k}\circ \dots\circ f_{i_1}^{\lambda_1}(\Delta).
\end{equation*}
Let $\Prob=\mu^{(\N)}$ be the product measure on $\Omega=\Lambda^{\N}$. 
We write 
\begin{equation*}
  \E(X(\omega)) = \int_\Omega X(\omega) d\Prob(\omega)
  \quad \text{and} \quad
  \E^g(X(\omega)) =\exp \int_\Omega \log X(\omega) d\Prob(\omega)
\end{equation*}
The one-variable random attractor $F_\omega$ is then obtained by choosing $\omega\in\Omega$
according to the law $\Prob$.
\subsubsection{One-variable random self-similar sets}
To make useful dimension estimates we have to restrict the class of functions. The simplest model is
that of self-similar sets, where we restrict $f_i^\lambda$ to similarities. That is, $\lvert
f_i^\lambda(x)-f_i^\lambda(y)\rvert = c_i^\lambda \lvert x - y\rvert$ for all $x,y\in V$ and some
$c_i^\lambda >0$.
It is well-known that for self-similar maps and our standing assumptions, the 
Hausdorff and box-counting dimensions are bounded
above by the unique $s$ satisfying
\begin{equation}\label{eq:simdim}
  \E^g \left(\sum_{1\leq i \leq N_{\omega_1}} (c_i^{\omega_1})^s\right) = 1.
\end{equation}
If one further assumes that there exists a non-empty open set $U$ such that the union
$\bigcup_{i=1}^{N_\lambda}f_i^\lambda(U)$ is disjoint for all $\lambda$ and $f_i^\lambda(U)\subseteq
U$ we say that the uniform open set condition holds. Under this assumption the unique $s$ in
\eqref{eq:simdim} coincides with the Hausdorff, box-counting, and quasi-Assouad dimension of
$F_\omega$ for $\Prob$-almost all $\omega$, see e.g.\ \cite{Troscheit17} and references therein.
Since we refer to the sum above quite frequently we write $\fS_\lambda^t = \sum
(c_i^\lambda)^t$.
To avoid the trivial case when the Assouad dimension coincides with the Hausdorff dimension, and
there is nothing to prove as the generalised Assouad dimension coincides with this common
value, we make the assumption that the systems is not almost deterministic. That is,
$\Prob(\fS_{\omega_1}^s=1)\neq 1$, where $s$ is the almost sure Hausdorff dimension.
In particular, this implies that the Assouad dimension is
strictly bigger than the Hausdorff (and upper box-counting) dimension.
In fact, the Assouad dimension of $F_\omega$ is, almost surely, given by 
\begin{equation*}
  \dimA F_\omega = \sup\{s\;:\;
  \mu(\{\lambda\in\Lambda\;:\;\fS_\lambda^s\geq 1\})>0\}
\end{equation*}
see \cite{Fraser14c,Troscheit17}.
To not obscure the result with needless technicality, we only analyse the case when the iterated
function systems are homogeneous, i.e.\ $c_i^\lambda = c_j^\lambda$ for all $\lambda$. We write
$c(\lambda)$ for the common value, then $\fS_\lambda^s = N_\lambda c(\lambda)^s$.
\begin{theorem}\label{thm:SelfSim}
  Let $F_\omega$ be a one-variable random self-similar set generated by homogeneous iterated
  functions systems satisfying the uniform open set condition.
  Then the following dichotomy holds:
  Let $\phi(x)$ be a dimension function such that 
  \begin{equation*}
    \sum_{k=1}^\infty e^{-\phi(e^{-k})k} < \infty.
  \end{equation*}
  Then $\dimAf F_\omega = \dimB F_\omega$ for dimension functions $\fii(x)=\upomega(\phi(x))$, almost surely.

  Conversely, let $\phi(x)$ be a dimension function such that
  \begin{equation*}
    \sum_{k=1}^\infty e^{-\phi(e^{-k})k}=\infty.
  \end{equation*}
  Then $\dimAf F_\omega = \dimA F_\omega$ for dimension functions $\fii(x)=o(\phi(x))$, almost surely.
  Additionally, assume there exists $\Lambda'\subset \Lambda$ such that $\fS_\lambda^{s_A}=1$ for all
  $\lambda\in\Lambda'$, where $s_A$ is the almost
  sure  Assouad dimension of $F_\omega$, and $\mu(\Lambda')>0$. Then the above result holds for all
  $\varphi(x) \leq C(\phi(x))$, where $C>0$ is some constant.
\end{theorem}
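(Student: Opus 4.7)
The plan is to translate the $\varphi$-Assouad spectrum problem into a concentration statement for block averages of $(\log N_{\omega_j},\log c(\omega_j))$ along the i.i.d.\ driving sequence $\omega=(\omega_j)$, then apply Cram\'er-type large deviation estimates for the upper bound and the Borel-Cantelli lemma (in its independent or dynamical form) for the lower bound. Under the uniform open set condition and homogeneity, every level-$n$ cylinder has the common diameter $R_n(\omega)=\diam(\Delta)\prod_{j=1}^n c(\omega_j)$, and every level-$k$ cylinder contains exactly $M_{k,n}(\omega):=\prod_{j=k+1}^n N_{\omega_j}$ level-$n$ descendants, independent of the particular level-$k$ cylinder chosen. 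The strong law of large numbers yields $|\log R_n|/n\to|\E\log c|$ and $\log M_n/n\to\E\log N$ almost surely, which recovers $\dimB F_\omega=\E\log N/|\E\log c|$. Given matched scales $R=R_k$ and $r_k=R_k^{1+\varphi(R_k)}$, the appropriate comparison level $n=n(k)$ is characterised by $R_{n(k)}/R_k\asymp R_k^{\varphi(R_k)}$, so that $L_k:=n(k)-k\asymp k\varphi(R_k)/|\E\log c|$ by the strong law.

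For the upper bound, fix $\varepsilon>0$. Because the model is one-variable and homogeneous, the block $\omega_{k+1},\ldots,\omega_{n(k)}$ simultaneously governs the descendants of every level-$k$ cylinder, so no union bound over cylinders is needed at a fixed scale. Cram\'er's theorem applied to the i.i.d.\ sequence $(\log N_{\omega_j},\log c(\omega_j))$ bounds the probability of the bad event ``the cover exponent at scales $R_k,r_k$ exceeds $(1+\varepsilon)\dimB F_\omega$'' by $e^{-I(\varepsilon)L_k}$ for some rate $I(\varepsilon)>0$. Since $\varphi=\upomega(\phi)$ gives $I(\varepsilon)\varphi(R_k)\geq\phi(e^{-k})$ eventually (for any fixed $\varepsilon$), the union bound over $k$ combined with the hypothesis $\sum_k e^{-\phi(e^{-k})k}<\infty$ and Borel-Cantelli deliver $\dimAf F_\omega\leq(1+\varepsilon)\dimB F_\omega$ almost surely; letting $\varepsilon$ range over a countable null sequence and using the trivial $\dimAf F_\omega\geq\dimB F_\omega$ closes the first dichotomy.

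For the lower bound, set $\Lambda_\delta=\{\lambda:\fS_\lambda^{s_A-\delta}\geq 1\}$; by definition of $s_A$, $\mu(\Lambda_\delta)>0$ for every $\delta>0$. If the block $\omega_{k+1},\ldots,\omega_{n(k)}$ lies entirely in $\Lambda_\delta$, each level-$k$ cylinder has at least $(R_k/r_k)^{s_A-\delta}$ level-$n(k)$ descendants, witnessing $\dimAf F_\omega\geq s_A-\delta$. Pick an increasing subsequence $k_1<k_2<\ldots$ with $k_{j+1}>n(k_j)$ so that the block events are independent; the independent Borel-Cantelli lemma then forces infinitely many good blocks almost surely as soon as $\sum_j\mu(\Lambda_\delta)^{L_{k_j}}=\infty$. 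Since $\varphi=o(\phi)$ gives $L_{k_j}|\log\mu(\Lambda_\delta)|\leq\phi(e^{-k_j})k_j$ eventually, this reduces to checking that the subseries $\sum_j e^{-\phi(e^{-k_j})k_j}$ inherits divergence from $\sum_k e^{-\phi(e^{-k})k}=\infty$. Letting $\delta\downarrow 0$ along a countable set completes the second dichotomy. Under the additional hypothesis $\mu(\Lambda')>0$ with $\fS_\lambda^{s_A}=1$ on $\Lambda'$, the same argument with $\delta=0$ and constant $C=1/|\log\mu(\Lambda')|$ gives the stronger conclusion for all $\varphi\leq C\phi$, since the $\delta$-loss is absent.

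The hard part will be the inheritance of divergence by the subsequence $(k_j)$: the gaps $k_{j+1}-k_j\asymp k_j\varphi(R_{k_j})$ become small relative to $k_j$ as $\varphi\to 0$, so the subsequence is asymptotically dense, but a clean accounting is delicate. A more robust route---alluded to in the introduction---uses the dynamical Borel-Cantelli lemma for the Bernoulli shift $(\Omega,\Prob,\sigma)$, which handles overlapping block events directly via mixing and avoids the subsequence construction. A secondary technicality is the passage between the random cylinder scale $R_k(\omega)$ and the deterministic $e^{-k}$ appearing in the summability hypothesis; this introduces $o(k)$ errors controlled by the strong LLN for $\log c(\omega_j)$ and absorbed by the slack in $\varphi=\upomega(\phi)$ or $\varphi=o(\phi)$, as a constant rescaling of $\phi$ does not affect these asymptotic relations.
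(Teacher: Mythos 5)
Your upper bound is essentially the paper's argument: set $X_i=\log\fS_{\omega_i}^s$ with $\E X_i=0$, apply Cram\'er's theorem to the block sums over levels between $k(R)$ and $k(r)$, sum the tail probabilities over scales, and finish with Borel--Cantelli. You also correctly note that in the one-variable model no union bound over cylinders at a fixed level is needed (this is the key structural simplification compared with the Galton--Watson case), and the passage from random cylinder scales to integers $e^{-k}$ costs only a constant absorbed by the $\upomega$-slack.

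The lower bound is where there is a genuine gap, and you have in fact diagnosed it yourself but not closed it. The sparse-subsequence route—choosing $k_1<k_2<\cdots$ with $k_{j+1}>n(k_j)$ so the block events are independent and then applying independent Borel--Cantelli—requires the subseries $\sum_j e^{-\phi(e^{-k_j})k_j}$ to diverge, and this is \emph{not} implied by divergence of the full series. Concretely, take $\phi$ with $\phi(e^{-k})k=\log k+\log\log k$, a valid dimension function with $\sum_k e^{-\phi(e^{-k})k}=\sum_k (k\log k)^{-1}=\infty$. For a $\fii=o(\phi)$ decaying only slightly faster (say $\fii\asymp\phi/\log\log$), the gaps $L_{k_j}\asymp k_j\fii(R_{k_j})\asymp\log k_j/\log\log k_j$ grow without bound, the subsequence has density $\asymp\log\log k/\log k$, and
\[
  \sum_j e^{-\phi(e^{-k_j})k_j}\;\asymp\;\sum_k\frac{\log\log k}{k(\log k)^2}\;<\;\infty.
\]
So the independent Borel--Cantelli lemma fails to apply for precisely the borderline $\phi$ that the theorem must cover. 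Your closing remark—that the dynamical Borel--Cantelli lemma for the shift handles overlapping block events and avoids the subsequence—is exactly the route the paper takes: it defines the (overlapping, non-independent) events $E_n=\{\omega_i\in T_\eps \text{ for } n\le i<n+\psi(n)n\}$ for \emph{every} $n$, computes the pair correlations $\mathcal{R}_{n,m}=\Prob(E_n\cap E_m)-\Prob(E_n)\Prob(E_m)$ explicitly using independence of the $\omega_i$, shows they are dominated by a geometric tail $p_\eps^{|m-n|}$, verifies the quantitative hypothesis $\sum_{N\le n,m\le M}\mathcal{R}_{n,m}\lesssim\sum_{n=N}^M\Prob(E_n)$ of the Sprind\v{z}uk/Chernov--Kleinbock lemma (Theorem~\ref{thm:kleinbock}), and only then uses divergence of the full series $\sum_n\Prob(E_n)$. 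That correlation estimate, not the divergence check, is the actual substance of the lower bound; without it (or some substitute) the argument as written does not go through. The handling of the supplementary hypothesis ($\mu(\Lambda')>0$, taking $\eps=0$ to upgrade $o(\phi)$ to $C\phi$) is correct and matches the paper.
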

We prove this in Section \ref{sect:OneSimProof}.
The methods we have used rely on the individual iterated function systems being homogeneous but we
need not have made this argument. One can construct a subtree such that for this random realisation
the covering numbers are comparable. Since this is somewhat more technical, we leave this case open.

\subsubsection{One-variable random Bedford-McMullen carpets}
Bedford-McMullen carpets are  simple self-affine iterated function systems that are often the
easiest to give as counterexamples to the self-similar theory~\cite{Bedford84,McMullen84}. They consist of non-overlapping
images of the unit square with fixed horizontal and vertical contraction of $1/m$ and $1/n$,
respectively that align in an $m\times n$ grid of the unit square, see Figure~\ref{fig:BMCarpets}
for two examples.
\begin{figure}[htb]
  \includegraphics[width=.45\textwidth]{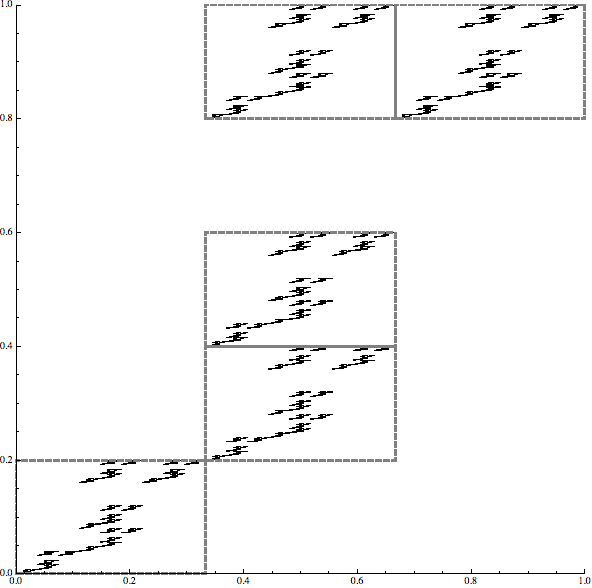}
  \includegraphics[width=.45\textwidth]{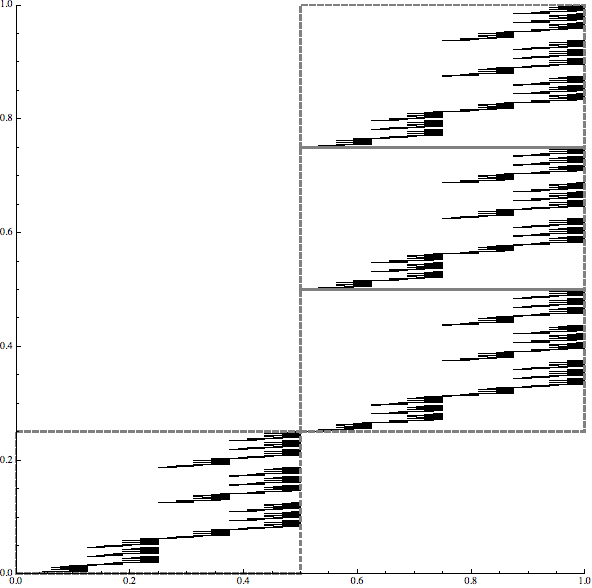}
  \caption{Two Bedford-McMullen carpets with their generating affine maps. The left example $F_1$
  has $m_1=3$, $n_1=5$, $C_1=3$, $B_1=3$, and $N_1=5$. The right example $F_2$ has $m_2=2$, $n_2=4$,
$C_2=3$, $B_2=2$, and $N_2=4$.}
  \label{fig:BMCarpets}
\end{figure}

We randomise the construction in the same one-variable random fashion by choosing different sub rectangles at every
step from the finite collection of possible arrangements. We denote these in the same way and
write $2\leq m_\lambda<n_\lambda \leq N<\infty$ for the subdivisions.
Then, $f_i^\lambda(x)$ are of the form
\[
  f_i^\lambda(x) = \begin{pmatrix} 1/m_\lambda & 0 \\ 0 & 1/n_\lambda \end{pmatrix} x +
  \begin{pmatrix}a_i^\lambda \\ b_i^\lambda \end{pmatrix}
\]
where $a_i^\lambda\in \{0,\dots,m_\lambda-1\}$ and $b_i^\lambda\in\{0,\dots,n_\lambda-1\}$.
The boundedness of $n_\lambda$ implies that there are only finitely many IFSs with finitely many
maps. Hence $\Lambda$ is finite and $\mu$ is finitely supported. We write $p_\lambda=\mu(\lambda)$.

We heavily rely on results in \cite{FraserTroscheit18}, where the $\theta$-Assouad spectrum of these
attractors are found. In fact, this part can be considered an extension of \cite{FraserTroscheit18},
in the sense that the previous work gave a complete characterisation of the spectrum between the
upper box-counting and the quasi-Assouad dimension, whereas we extend this to the Assouad dimension. 
Let $C_\lambda$ be the maximal number of maps that align in a column and $B_\lambda$ be the maximal
number of non-empty columns. Further, let $N_\lambda$ be the number of maps in the IFS indexed by
$\lambda$. See also Figure~\ref{fig:BMCarpets} for an example. Write $\overline{n}=\E^g (n_\lambda)$ and $\overline {m} = \E^g(m_\lambda)$.
Similarly, let $\overline{B},\overline{C}$, and $\overline N$ denote their respective geometric
means.
The almost sure Assouad spectrum was found in \cite{FraserTroscheit18} to be 
\[
  \dimAt F_\omega = 
  \begin{cases}
\dfrac{1}{1- \theta}
\left( \dfrac{ \log \left( \overline{B}\, \overline{C}^\theta \overline{N}^{-\theta}
\right) }{ \log\overline{m}}  +   \dfrac{ \log
  \left(\overline{N}\, \overline{B}^{-1} \overline{C}^{-\theta}\right)}{\sum_{\lambda  } p_\lambda \log n_\lambda}
  \right),  & 0< \theta \leq \dfrac{ \log \overline{m}}{\log \overline{n}}; \\ 
  \dfrac{\log \overline{B} }{\log \overline{m}}  +
  \dfrac{\log \overline{C}}{\log \overline{n}}, &  \dfrac{\log \overline{m}}{ \log \overline{n}}< \theta
 <1\,.
 \end{cases}
\]
From this we can further deduce that
\[
  \dimB F_\omega =   \dfrac{\log  \overline{B} }{
    \log \overline{m}}  +   \dfrac{\log \overline{N}\, \overline{B}^{-1}}{ \log
    \overline n}
\quad\text{ and }\quad
\dimqA = \frac{\log \overline B }{\log\overline m}  +
\frac{\log \overline C}{\log\overline n}.
\]
However, the almost sure Assouad dimension is generally distinct from the quasi-Assouad dimension
and given by
\[
  \dimA F_\omega = \max_{\lambda\in\Lambda} \, \frac{\log B_\lambda}{\log m_\lambda} \, + \, \max_{\lambda\in\Lambda}
  \,  \frac{\log C_\lambda}{\log n_\lambda}.
\]
see \cite{Fraser14c}.
Note, in particular, that the dimension does not depend on the exact form of $\mu$, provided it is
supported on $\Lambda$.  
Our main result in this section is bridging this gap with a similar dichotomy as for self-similar
sets.
\begin{theorem}\label{thm:BedfordMcMullen}
  Let $F_\omega$ be a one-variable random Bedford-McMullen carpet.
  Then the following dichotomy holds:
  Let $\phi(x)\leq \log\overline n / \log \overline m -1$ be a dimension function such that 
  \begin{equation*}
    \sum_{k=1}^\infty e^{-\phi(e^{-k})k} < \infty.
  \end{equation*}
  Then $\dimAf F_\omega = \dimqA F_\omega$ for dimension functions $\fii(x)=\upomega(\phi(x))$, almost surely.

  Conversely, let $\phi(x)$ be a dimension function such that
  \begin{equation*}
    \sum_{k=1}^\infty e^{-\phi(e^{-k})k}=\infty.
  \end{equation*}
  There exists $C>0$ such that $\dimAf F_\omega = \dimA F_\omega$ for all dimension functions
  $\fii(x)\leq C\phi(x)$, almost surely.
\end{theorem}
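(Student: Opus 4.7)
The plan is to adapt the covering-count analysis for one-variable random Bedford-McMullen carpets from \cite{FraserTroscheit18} to the generalised-spectrum setting, and to combine it with large deviation estimates and the Borel-Cantelli lemmas in the same spirit as the self-similar case treated in Theorem~\ref{thm:SelfSim}. First I would fix $R = e^{-k}$ and $r = R^{1+\fii(R)}$ and recall from \cite{FraserTroscheit18} how $N_r(F_\omega \cap B(x,R))$ decomposes into a horizontal (column) product of the $B_{\omega_i}$ over one range of levels and a vertical (stack) product of the $C_{\omega_i}$ over a later range, normalised by the corresponding powers of $m_{\omega_i}$ and $n_{\omega_i}$. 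The hypothesis $\phi(x) \le \log\Bbar n/\log\Bbar m - 1$ is exactly what keeps the corresponding $\theta = 1/(1+\phi(R))$ in the second regime of the $\dimAt$ formula, so the decomposition is directly applicable and the combined window of relevant levels has length proportional to $k\fii(e^{-k})$.

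For the upper bound, given $\fii = \upomega(\phi)$ with $\sum_k e^{-\phi(e^{-k})k}<\infty$, I would apply Hoeffding's inequality (since $\log m_{\omega_i}$, $\log n_{\omega_i}$, $\log B_{\omega_i}$, $\log C_{\omega_i}$ are bounded i.i.d.\ sequences) to control the deviation of the four empirical geometric means over any window of length $\ell_k \sim k\fii(e^{-k})$ from their true expectations $\log\Bbar m$, $\log\Bbar n$, $\log\Bbar B$, $\log\Bbar C$. The probability that any one of these deviates by a fixed $\eps>0$ is bounded by $e^{-c\ell_k}$ and, because $\fii = \upomega(\phi)$, is eventually dominated by $e^{-\phi(e^{-k})k}$. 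A union bound over the four sums and over the $e^{O(k)}$ choices of ball centre contained in $F_\omega$, followed by the first Borel-Cantelli lemma, shows that almost surely only finitely many $k$ witness any bad deviation. For all remaining $k$, the covering count $N_r(F_\omega\cap B(x,R))$ satisfies the bound predicted by the quasi-Assouad formula $\log\Bbar B/\log\Bbar m + \log\Bbar C/\log\Bbar n$, giving $\dimAf F_\omega \le \dimqA F_\omega$; the opposite inequality is automatic.

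For the lower bound, pick $\lambda^{(B)}, \lambda^{(C)}\in\Lambda$ realising $\max_\lambda \log B_\lambda/\log m_\lambda$ and $\max_\lambda \log C_\lambda/\log n_\lambda$ in the formula $\dimA F_\omega = \max_\lambda \log B_\lambda/\log m_\lambda + \max_\lambda \log C_\lambda/\log n_\lambda$. Let $E_k$ be the event that $\omega$ has a horizontal block of $\lambda^{(B)}$ followed by a vertical block of $\lambda^{(C)}$ occupying precisely the window of levels mapped by the Bedford-McMullen scale decomposition to the pair $(R_k, r_k) = (e^{-k}, R_k^{1+\fii(R_k)})$, with the sub-block lengths chosen so that the two contributions match the horizontal and vertical components of $\dimA F_\omega$. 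By independence of the $\omega_i$, $\Prob(E_k) \ge q^{k\fii(e^{-k})}$ where $q = \min(p_{\lambda^{(B)}}, p_{\lambda^{(C)}}) > 0$, so for $\fii \le C\phi$ with $C|\log q| \le 1$ we obtain $\Prob(E_k) \ge e^{-k\phi(e^{-k})}$ and $\sum_k \Prob(E_k) = \infty$. Choosing a sufficiently sparse subsequence $k_j$ makes the $E_{k_j}$ mutually independent, so the second Borel-Cantelli lemma yields infinitely many realisations, each providing a ball at which the covering number reaches $(R_{k_j}/r_{k_j})^{\dimA F_\omega - o(1)}$, and hence $\dimAf F_\omega \ge \dimA F_\omega$ almost surely.

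The main technical obstacle will be the geometric bookkeeping in the lower bound: the two maxima defining $\dimA F_\omega$ may be realised at different $\lambda^{(B)}\neq \lambda^{(C)}$, so unlike the self-similar case a single constant-$\lambda$ block does not suffice. The Bedford-McMullen scale decomposition resolves this, since column counting happens at the coarse levels of the $(R,r)$-window while stack counting happens at the fine levels, and the two sub-windows can independently be populated by $\lambda^{(B)}$ and $\lambda^{(C)}$. The price is that the probability of the required configuration is $q^{\ell_k}$ rather than $p_{\lambda^*}^{\ell_k}$ for a single $\lambda^*$, and balancing the window lengths against these probabilities is what produces the weaker conclusion "for all $\fii \le C\phi$" with an explicit constant rather than the $\fii = o(\phi)$ statement available in the self-similar case under the additional assumption in Theorem~\ref{thm:SelfSim}.
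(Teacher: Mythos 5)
Your upper bound is essentially the paper's argument: use the covering-count decomposition from \cite{FraserTroscheit18}, centre the i.i.d.\ variables $\log B_{\omega_i}$, $\log C_{\omega_i}$ (the paper uses Cram\'er's theorem where you use Hoeffding; both give the needed exponential decay), take a union bound over $r < R^{1+\fii(R)}$, and apply the first Borel--Cantelli lemma. One caveat: your ``union over the $e^{O(k)}$ choices of ball centre'' is neither needed nor affordable. In the one-variable model the covering number of $B(x,R)\cap F_\omega$ is governed by the single symbol sequence $\omega$, so it is essentially independent of $x$; and if you did union over $e^{O(k)}$ independent events you would lose, because the deviation probability $e^{-c\,k\fii(e^{-k})}$ decays only subexponentially in $k$ whenever $\fii(x)\to 0$. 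Dropping that union fixes the issue, and the rest of your first part matches the paper's.

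The lower bound contains a genuine gap. You propose to pass to a sparse subsequence $k_j$ so that the events $E_{k_j}$ become mutually independent and then apply the classical second Borel--Cantelli lemma. But the window occupied by $E_k$ starts at level $k$, must reach at least the level $k'\sim k\log\overline n/\log\overline m$ (where the height-$R$ and base-$R$ scales synchronise), and then extends a further $\kappa\psi(k)k$ levels; so for $E_{k_j}$ and $E_{k_{j+1}}$ to be independent you need $k_{j+1}\gtrsim k_j\log\overline n/\log\overline m$, i.e.\ a geometrically growing subsequence. Over such a subsequence the divergence hypothesis $\sum_k e^{-\phi(e^{-k})k}=\infty$ gives you nothing: e.g.\ if $e^{-\phi(e^{-k})k}\sim 1/(k\log k)$ the full sum diverges but the geometric subsum $\sum_j 1/(C^j j)$ converges. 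So the second Borel--Cantelli lemma cannot be applied and your argument stops. The paper avoids this by keeping the full, overlapping family $(E_l)_{l\ge 1}$ and instead invoking the Sprind\v{z}uk--Chernov--Kleinbock quasi-independence Borel--Cantelli lemma (Theorem~\ref{thm:kleinbock}); the real work is then verifying the correlation bound $\sum_{J\le j,l\le L}\mathcal{R}_{j,l}\lesssim\sum_{l=J}^L\Prob(E_l)$, split according to whether the ``height'' windows overlap or the ``height'' window of one event overlaps the ``base'' window of the other. You would need some such pairwise quasi-independence estimate to close the argument; independence via thinning is not available here.

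A smaller point: you are right that having $\lambda^{(B)}\neq\lambda^{(C)}$ is what forces the windows to be split into a base-scale block of $\lambda^{(B)}$ and a height-scale block of $\lambda^{(C)}$, and this is exactly what the paper does with its constant $\kappa=\log n_2/\log m_1$ governing the relative lengths. Your heuristic that this produces the ``$\fii\le C\phi$'' conclusion with an explicit constant is also the paper's; the constant is $\gamma$ in the paper's definition of $\psi$.
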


\begin{remark}
The dichotomies, or phase transitions, observed 
in Theorems \ref{thm:GWupper}, \ref{thm:bottomGW}, \ref{thm:SelfSim} and \ref{thm:BedfordMcMullen}
can be seen as a form of random mass transference principle, as described in \cite[\S 5]{Allen19}.
In the theorems described there, no assumptions are being made on the overlaps and it would be
interesting to know if any separation condition assumptions are needed in our results at all.
\end{remark}

\section{Proofs for Galton-Watson trees}
\label{sect:GWProofs}
In this section we prove Theorems \ref{thm:GWupper} and \ref{thm:bottomGW}. 
We rely on the following lemma.
\begin{lemma}\label{thm:upperGWP}
  Assume there exists $\tau'>0$ such that $\sup_k\E(\exp(\tau' W_k ))<\infty$, then there exists
  $\tau>0$ such that
  \begin{equation*}
  \Prob(Z_k \geq m^{(1+\varepsilon)k}) \leq C \exp(-\tau m^{\varepsilon k}).
\end{equation*}
\end{lemma}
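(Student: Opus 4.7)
The statement is essentially a Chernoff-type tail bound, and the hypothesis is tailor-made for an exponential Markov inequality. My plan is to rewrite the event in terms of $W_k = Z_k/m^k$ and then use the uniform bound on the exponential moments.

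First, I would observe that
\[
  \{Z_k \geq m^{(1+\varepsilon)k}\} = \{W_k \geq m^{\varepsilon k}\}.
\]
Setting $M = \sup_k \E(\exp(\tau' W_k))$, which is finite by hypothesis, Markov's inequality applied to the non-negative random variable $\exp(\tau' W_k)$ then yields
\[
  \Prob(W_k \geq m^{\varepsilon k}) = \Prob\bigl(\exp(\tau' W_k) \geq \exp(\tau' m^{\varepsilon k})\bigr) \leq M \exp(-\tau' m^{\varepsilon k}).
\]
Taking $\tau = \tau'$ and $C = M$ delivers the claimed estimate. There is nothing deeper than a one-line Chernoff argument going on.

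The only point that deserves a brief comment is why the hypothesis is a reasonable one to assume in the applications to Theorems \ref{thm:GWupper} and \ref{thm:bottomGW}. Since the offspring distribution is bounded by $N$, the martingale $W_k$ lives on $[0, (N/m)^k]$, so $\E(\exp(\tau W_k))$ certainly exists for every fixed $k$; the substantive content is its uniformity in $k$. This is a classical fact for supercritical Galton--Watson processes with bounded offspring: the limit $W = \lim W_k$ has finite exponential moments in a neighbourhood of the origin (see e.g.\ the standard treatment via Harris's functional equation for the Laplace transform of $W$), and by dominated convergence together with Fatou this transfers to a uniform bound on the $W_k$ for $\tau'$ sufficiently small. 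Thus there is no obstruction to invoking the lemma in the sequel.

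In short, the proof is a straightforward exponential Markov inequality, and the only "hard part'' is really the input, namely verifying that the exponential moments of $W_k$ are uniformly controlled --- which is itself a standard fact about bounded supercritical Galton--Watson martingales and should either be quoted or stated as a preliminary observation.
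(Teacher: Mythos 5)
Your argument is exactly the standard Chernoff bound that the paper invokes (it defers to \cite[Lemma 3.4]{Troscheit19} rather than spelling it out): rewrite the event as $\{W_k \geq m^{\varepsilon k}\}$ and apply Markov's inequality to $\exp(\tau' W_k)$, taking $\tau = \tau'$ and $C = \sup_k \E(\exp(\tau' W_k))$. Your supplementary remarks correctly identify that the substantive input is the uniform exponential moment bound for bounded supercritical Galton--Watson martingales, which the paper likewise sources from Athreya \cite[Theorem 4]{Athreya94}.
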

The proof of this lemma follows easily from a standard Chernoff bound, see \cite[Lemma 3.4]{Troscheit19} for
details.
Further, the conditions of Lemma~\ref{thm:upperGWP} are satisfied for supercritical Galton-Watson
processes with finitely
supported offspring distribution, see
Athreya \cite[Theorem 4]{Athreya94} and \cite[\S3]{Troscheit19} for details.

Note that a ball of size $r$ in the metric on $F_\tau$ 
with centre $x\in F_\tau$ is simply the unique subtree containing $x$ that starts at level $k$ satisfying 
$2^{-k}\leq r< 2^{-(k-1)}$. Hence, for two scales $r< R$ the quantity $N_r(B(x,R))$ is equal to
the number of nodes at level $l$ satisfying $2^{-l}\leq r< 2^{-(l-1)}$ that share a common
ancestor with $x$ at level $k$ satisfying $2^{-k}\leq R< 2^{-(k-1)}$.
Using independence, this is an independent copy of $Z_{l-k+1}$ and so 
$N_r(B(x,R))\sim Z_{\log_2(1/r)- \log_2 (1/R)}$.

\begin{proof}[Proof of Theorem \ref{thm:GWupper}]
  Fix $\varepsilon>0$. By Lemma~\ref{thm:upperGWP} we have
  \begin{equation*}
    \Prob\{Z_k \geq m^{(1+\varepsilon)k} \text{ for some }k\geq l\} \leq \sum_{j=l}^\infty C
    \exp(-\tau m ^{\varepsilon k})
    \lesssim  \exp(-\tau m^{\varepsilon l})
  \end{equation*}
  For large enough $k$, there exists $A$ (depending on the realisation) such that $Z_k \leq A m^{k}$, 
  almost surely. Thus, the probability $P_k$ that there is a node at generation k that exceeds
  the average from generation $(1+\phi(e^{-k}))k$ onwards satisfies
  \begin{equation*}
    P_k \lesssim m^{k} \exp(-\tau m^{\varepsilon \phi(e^{-k}) k})
    = \exp\left(k\log m - \tau m^{\varepsilon \phi(e^{-k})k}\right)
    \leq \exp\left(k\log m - \tau m^{(1+\delta)\log k/\log m}\right)
  \end{equation*}
  for some $\delta>0$ be assumption on $\phi$.
  Finally, as $k\log m - \tau k^{(1+\delta)} < - k^{1+\delta/2}$ for large enough $k$, we have
  \begin{align*}
    \sum_{k=1}^\infty P_k 
    \lesssim \sum_{k=1}^\infty \exp\left(k\log m - \tau k^{1+\delta}\right)
    \lesssim \sum_{k=1}^\infty  \exp(-\tau k^{1+\delta/2})<\infty. 
  \end{align*}
  An application of the Borel-Cantelli lemma shows that, almost surely, there exists a level
  $k_0$ from which no node at level $k\geq k_0$ in the Galton-Watson tree will have more than
  $m^{(1+\varepsilon)l}$ 
  many descendants for $l\geq \phi(e^{-k})k$. Geometrically, this
  means that for all $r \sim 2^{-l} < R^{(1+\phi(e^{-k}))k}\sim 2^{-(1+\phi(e^{-k})k}$ small enough we obtain
  \[
    N_r(B(x,R)) \,\sim\, Z_{l-k} \,\lesssim \, m^{(1+\varepsilon)(\log_2(1/r)-\log_2(1/R))} 
    \,=\,\left(\frac{R}{r}\right)^{(1+\varepsilon)\log m/\log 2},
  \]
  which gives the required result.
\end{proof}

\begin{proof}[Proof of Theorem \ref{thm:bottomGW}]
  Let $N=\max\{i\,:\,\theta_i>0\}$ and assume that $N>m$ as otherwise there is nothing to
  prove.
  There exists $q>0$ and $k_0\geq 1$ such that $\Prob(Z_k > m^k)\geq q$ for all $k\geq k_0$ by the
  martingale convergence theorem.
  Write $p = \theta_{N}$. Fix $k$ such that $k-l_k>k_0$, where $l_k=\varepsilon k \log
  m/\log(N/m)$. Consider the
  probability that the maximal branching is chosen in the first $l_k$ levels after the root.
  Then, at level $l_k$,
  there are $N^{l_k}$ descendants. This occurs
  with probability $p \, p^{N} p^{N^2}\cdots p^{N^{l_k-1}}$. Each descendant has
  $m^{k-l_k}$ descendants at level $k$ with probability at least $q$ and therefore the probability
  that $Z_k \geq m^{(1+\varepsilon)k}$ is bounded below by 
  \begin{equation}
    \label{eq:thesecthing}
    p \cdots p^{N^{l_k-1}} q^{N^{l_k}}\geq \varrho^{N^{l_k+1}}
    =\exp(N^{\varepsilon k \log m / \log(N/m)+1}\cdot\log\varrho)
    \end{equation}
  for some $\varrho>0$.
  Let $(k_i)$ be a sequence such that $(1+\phi(e^{-k_i}))k_i < k_{i+1}$.
  Then,
  \begin{equation}
    \widetilde{P}_i =\Prob(\exists\text{ node at level }k_i \text{ s.t. }Z_{n_i}\geq
    m^{(1+\varepsilon)n_i}\text{ for }n_i = \phi(e^{-k_i})k_i 
    \geq \left(1- \left(1-\varrho^{N^{l_{k_i}}}\right)^{Am^k}\right)
    \label{eq:levelprob}
  \end{equation}
by independence and the fact that there are at least $A m^k$ nodes at level $k$.
  Note that combining \eqref{eq:condupper} with \eqref{eq:thesecthing} one obtains
  \[
    \varrho^{N^{l_k+1}}\geq \exp \left( N k^{1-\delta}\log\varrho\right)
  \]
for some $\delta>0$. Since further $A m^k = A \exp(k\log m)$ we obtain
  \begin{multline*}
    \exp\log \left(1-\varrho^{N^{l_{k_i}}}\right)^{Am^k}
    \leq \exp \left(-Am^k \varrho^{N^{l_{k_i}}}\right)\\
    \leq \exp \left( -A \exp\left(k\log m - Nk^{1-\delta} \log(1/\varrho)\right)\right)\to 0
  \end{multline*}
Therefore, for large enough $k$, the quantity in \eqref{eq:levelprob}
is bounded below by $1/2$ and 
  \begin{equation*}
    \sum_i \widetilde{P}_i \;\gtrsim \;\sum_i 1/2 =\infty.
  \end{equation*}
  The disjointness combined with the Borel-Cantelli lemma therefore posit the existence of
  infinitely many $i$ for which such a maximal chain exists. The dimension result directly follows
  by taking $R_i=2^{- k_i}$ and $r_i=2^{-n_i}$ to give a sequence of $N_{r_i}(B(x_i, R_i))$ such that
  \[
    N_{r_i}(B(x_i, R_i)) \gtrsim m^{(1+\varepsilon)(n_i-k_i)} =
    \left(\frac{R}{r}\right)^{(1+\varepsilon) \log m / \log 2}. \qedhere
    \]
\end{proof}

\section{One variable proofs}
\label{sect:OneSimProof}
\subsection{Cram\'er's theorem for i.i.d.\ variables}
Cram\'er's theorem is a fundamental result in large deviations concerning the error of sums of
i.i.d.\ random variables.
Given a sequence of i.i.d.\ random variables $(X_i)_i$, we write $S_n=\sum_{i=1}^n X_i$. The rate function
of this process is defined by the Legendre transform of the moment generating function of the random
variable. That is, the moment generating function is $M(\theta)= \E(\exp(\theta X_1))$ and its
Legendre transform is $I(x) = \sup_{\theta\in\R} \theta x - \log M(\theta)$. 
\begin{theorem}\label{thm:cramer}
	Let $(X_i)_i$ be a sequence of centred i.i.d.\ random variables with common finite moment
	generating function $M(\theta)= \E(\exp(\theta X_1))$.
  Then, if $M(\theta)$ is well-defined for all $\theta$, the following hold:
	\begin{enumerate}[(a)]
		\item For any closed set $F\subseteq \R$,
			\[
				\limsup_{n\to\infty} \frac{1}{n} \log \Prob(S_n\in F) \leq -\inf_{x\in
				F} I(x);
			\]
		\item For any open set $U\subseteq \R$,
			\begin{equation*}
				\liminf_{n\to\infty} \frac{1}{n} \log \Prob(S_n \in U) \geq -\inf_{x\in
				U} I(x).
			\end{equation*}
	\end{enumerate}
\end{theorem}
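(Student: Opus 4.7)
Plan. The proof separates naturally into the upper bound (a) and the lower bound (b), which I would treat by the two classical large deviations techniques: exponential Chebyshev for (a) and the Cram\'er tilt (exponential change of measure) for (b).

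For (a), the basic input is that for any $a\geq 0$ and $\theta\geq 0$,
\[
  \Prob(S_n\geq na)\leq e^{-\theta n a}\E(e^{\theta S_n})=e^{-n(\theta a-\log M(\theta))},
\]
so optimising over $\theta\geq 0$ gives $\Prob(S_n\geq na)\leq e^{-nI(a)}$; a symmetric argument handles $a\leq 0$. To pass from half-lines to a general closed set $F$, I would use that the $X_i$ being centred forces $I$ to be convex, lower semicontinuous, and minimised at $0$ with $I(0)=0$, hence monotone on each side of $0$. Splitting $F$ into $F_+=F\cap[0,\infty)$ and $F_-=F\cap(-\infty,0]$ and taking $a^\pm$ to be the points of $F_\pm$ closest to $0$, one has $\inf_{F}I=\min\{I(a^+),I(a^-)\}$, and the half-line Chernoff bounds finish the argument after dividing by $n$ and taking $\limsup$.

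For (b), by a union argument it suffices, for each $x\in U$ and small $\delta>0$ with $(x-\delta,x+\delta)\subseteq U$, to show
\[
  \liminf_{n\to\infty}\frac{1}{n}\log\Prob\!\left(S_n/n\in(x-\delta,x+\delta)\right)\geq -I(x).
\]
Here I would invoke the Cram\'er tilt: since $M$ is finite on all of $\R$, $\log M$ is $C^\infty$ and strictly convex, so if $x$ lies in the image of $(\log M)'$ there is a unique $\theta^\ast$ with $(\log M)'(\theta^\ast)=x$ and $I(x)=\theta^\ast x-\log M(\theta^\ast)$. Define the tilted law $\widetilde\Prob$ on $\sigma(X_1,\dots,X_n)$ by $d\widetilde\Prob/d\Prob=\exp(\theta^\ast S_n)/M(\theta^\ast)^n$; under $\widetilde\Prob$ the $X_i$ are still i.i.d.\ but now with mean $x$. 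Writing
\[
  \Prob\!\left(S_n/n\in(x-\delta,x+\delta)\right)=\int_{\{S_n/n\in(x-\delta,x+\delta)\}}\frac{M(\theta^\ast)^n}{e^{\theta^\ast S_n}}\,d\widetilde\Prob,
\]
the integrand on the relevant event is bounded below by $\exp(-n(\theta^\ast x+|\theta^\ast|\delta-\log M(\theta^\ast)))$, while the $\widetilde\Prob$-probability of the event tends to $1$ by the weak law of large numbers applied to the tilted i.i.d.\ sequence. Letting $n\to\infty$ and then $\delta\to 0$ yields the required bound $-I(x)$.

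The main obstacle is handling points $x$ outside the image $J$ of $(\log M)'$, where the tilt fails to produce a $\theta^\ast$. Under the hypothesis $M(\theta)<\infty$ for all $\theta\in\R$, the effective domain of $I$ is $\overline J$ with $I\equiv+\infty$ off $\overline J$, so points $x\in U$ with $I(x)=+\infty$ contribute vacuously to the infimum, while boundary points $x\in\partial J$ are handled by approximating from within $J$ and using lower semicontinuity of $I$. Aside from this mild technicality, both halves are standard calculations once the Chernoff and tilt identities are in place.
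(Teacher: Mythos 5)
The paper states Theorem~\ref{thm:cramer} without proof: it is a classical result in large deviations theory, presented as background and then used as a black box in the proofs of Theorems~\ref{thm:SelfSim} and~\ref{thm:BedfordMcMullen}. Your argument is the standard textbook proof of Cram\'er's theorem on $\R$: the Chernoff (exponential Chebyshev) bound, together with convexity of $I$, $I(0)=0$ from centering, and the resulting monotonicity of $I$ on either side of the origin, yields part (a) after reducing a closed set to the nearest points $a^\pm$ of $F\cap[0,\infty)$ and $F\cap(-\infty,0]$; the exponential change of measure (Cram\'er tilt) at $\theta^\ast$ with $(\log M)'(\theta^\ast)=x$, combined with the weak law of large numbers for the tilted i.i.d.\ sequence, yields part (b). Both halves are correct as sketched, and your treatment of $x$ outside the range $J$ of $(\log M)'$ (vacuous when $I(x)=+\infty$; approximation from within $J$ plus lower semicontinuity of $I$ at boundary points otherwise) is the right way to close that gap. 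Since the paper offers no proof of this result, there is no competing argument to compare against; your route is the canonical one and it is sound.
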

Letting $F=[a,\infty)$ and $U=(a,\infty)$ for $0<a<\ess\sup X_1$ we have $I(a)>0$ and for all $\delta>0$ there
exists $N_\delta\in\N$ such that
\begin{align*}
  -\inf_{x\in U}I(x) - \delta \leq &\frac{1}{n} \log \Prob\{S_n \in U\} \leq \frac{1}{n} \Prob\{ S_n
  \in F\} \leq - \inf I(x)+\delta\\
  e^{-(I(a)+\delta)n} \leq &\Prob\left\{ \sum_{i=1}^n X_i \geq a n \right\} \leq e^{-(I(a)-\delta)n}
\end{align*}
for all $n\geq N_\delta$ since $I$ is non-decreasing.
Note that this holds for any $n$ large enough and so in particular even if $n$ depends on the stochastic process.

\subsection{A dynamical Borel-Cantelli Lemma}
To establish the strong dichotomy of the almost sure existence of extreme events we will need a
theorem slightly stronger than the second Borel-Cantelli lemma.

Let $E_n$ be a sequence of events such that $\sum \Prob (E_n)
=\infty$. If those events were independent, the second Borel-Cantelli lemma would assert that almost
every $\omega\in\Omega$ is contained in $\omega\in E_n$ for infinitely many $n$, i.e.\
$\Prob(\bigcap_{K=1}^\infty \bigcup_{k=K}^\infty E_k)=1$.
Since we will be dealing with events that are not
independent we will use a stronger version. Define the correlation by 
\[
  \mathcal{R}_{n,m} =
\Prob(E_n \cap E_m)- \Prob(E_n)\Prob(E_m).
\]
The following follows from the work of Sprind\v{z}uk~\cite{Sprindzuk}, see also
\cite[Theorem 1.4]{Chernov01}.
\begin{theorem}
  \label{thm:kleinbock}
  Let $E_n$ be a sequence of events such that $\sum\Prob(E_n)=\infty$. If there exists $C>0$ such
  that
  \[
    \sum_{N\leq n,m\leq M}\mathcal{R}_{n,m} \quad \leq\quad C \sum_{i=N}^M \Prob(E_i)
  \]
  for all $1\leq N < M <\infty$. Then, for $\Prob$-almost every $\omega$, $\omega\in E_n$ for
  infinitely many $n$.
\end{theorem}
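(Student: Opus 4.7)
The plan is to run a classical second-moment argument along a sparse subsequence, essentially the Chung--Erd\H{o}s approach to Borel--Cantelli for weakly correlated events.

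First I set $T_N = \sum_{n=1}^N \mathbf{1}_{E_n}$ and $\Phi(N) = \E(T_N) = \sum_{n=1}^N \Prob(E_n)$, which diverges to infinity by hypothesis. A direct computation identifies
\[
  \mathrm{Var}(T_N) \;=\; \sum_{1 \le n,m \le N} \mathcal{R}_{n,m},
\]
since the diagonal contributions $\Prob(E_n)(1-\Prob(E_n))$ are already equal to $\mathcal{R}_{n,n}$. Applied with ``$N=1$'' and with variable upper index ``$M=N$'', the correlation hypothesis then delivers $\mathrm{Var}(T_N) \le C\,\Phi(N)$, and Chebyshev's inequality produces
\[
  \Prob\bigl( \lvert T_N - \Phi(N)\rvert \ge \tfrac{1}{2}\Phi(N)\bigr) \;\le\; \frac{4C}{\Phi(N)}.
\]

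Next, because $\Phi$ is non-decreasing and unbounded, I extract an integer subsequence $(N_k)$ with $\Phi(N_k) \ge k^2$, which makes the right-hand side above summable in $k$. The first Borel--Cantelli lemma then gives $T_{N_k} \ge \tfrac{1}{2}\Phi(N_k) \to \infty$ almost surely, and monotonicity of $N \mapsto T_N$ promotes this to $T_N \to \infty$ almost surely. Since $T_N \to \infty$ is exactly the statement that $\omega$ lies in $E_n$ for infinitely many $n$, the theorem follows.

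The only genuine subtlety is balancing the sparseness of the extracted subsequence against the need to control the full sequence; the monotonicity of $T_N$ handles this almost for free, so I do not anticipate any serious technical obstacle beyond the variance bookkeeping. In particular, the choice $\Phi(N_k) \ge k^2$ is arbitrary: any power strictly greater than $1$ would work, which reflects the fact that the hypothesis only controls covariances up to a multiplicative constant.
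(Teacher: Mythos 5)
Your proof is correct. Let me verify the key steps: with $T_N=\sum_{n\le N}\mathbf{1}_{E_n}$ you have
\[
\mathrm{Var}(T_N)=\sum_{1\le n,m\le N}\bigl[\Prob(E_n\cap E_m)-\Prob(E_n)\Prob(E_m)\bigr]=\sum_{1\le n,m\le N}\mathcal{R}_{n,m},
\]
and taking the lower index to be $1$ in the hypothesis gives $\mathrm{Var}(T_N)\le C\Phi(N)$. Chebyshev then gives $\Prob(|T_N-\Phi(N)|\ge\tfrac12\Phi(N))\le 4C/\Phi(N)$, and since $\Phi(N)\uparrow\infty$ you can choose $N_k$ with $\Phi(N_k)\ge k^2$, apply the convergence Borel--Cantelli lemma to get $T_{N_k}\ge\tfrac12\Phi(N_k)\to\infty$ a.s., and use monotonicity of $N\mapsto T_N$ to conclude $T_N\to\infty$ a.s., which is the desired statement.

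This is a genuinely different route from the paper, which does not prove the theorem directly but instead invokes Sprind\v{z}uk's lemma \cite[\S7, Lemma~10]{Sprindzuk} as a black box with $f_k(\omega)=\chi_{E_k}(\omega)$ and $f_k=\varphi_k=\Prob(E_k)$. Sprind\v{z}uk's lemma is a \emph{quantitative} Borel--Cantelli statement: under the same second-moment hypothesis it gives the asymptotic
\[
\sum_{k\le N}\chi_{E_k}(\omega)=\Phi(N)+O\bigl(\Phi(N)^{1/2}\log^{3/2+\varepsilon}\Phi(N)\bigr)\quad\text{a.s.},
\]
from which divergence of the left side is immediate. Your argument extracts only the qualitative conclusion and is correspondingly shorter and self-contained: the sparse-subsequence Chebyshev trick avoids the dyadic block decomposition and the more delicate bookkeeping that go into proving the quantitative error term. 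The trade-off is that you learn nothing about the growth rate of $T_N$, but the theorem as stated does not ask for it, and the later applications in the paper (Theorems~\ref{thm:SelfSim} and~\ref{thm:BedfordMcMullen}) only use the ``infinitely often'' conclusion. One small caveat worth spelling out: the hypothesis is stated for $1\le N<M$, so the bound $\mathrm{Var}(T_M)\le C\Phi(M)$ is literally supplied only for $M\ge 2$; the case $M=1$ is trivial since $\mathrm{Var}(T_1)\le\Prob(E_1)=\Phi(1)$, so nothing is lost.
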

\begin{proof}
  This is a direct application of \cite[\S7, Lemma 10]{Sprindzuk} with
  $f_k(\omega)=\chi_{E_k}(\omega)$, $f_k = \varphi_k = \Prob(E_k)$ and the conclusion that $\sum
  \chi_{E_k}(\omega)$ diverges. 
\end{proof}

\subsection{Proof of Theorem~\ref{thm:SelfSim}: self-similar sets}
Let $X_i = \log\fS_{\omega_i}^s = \log
N_{\omega_i}c(\omega_i)^s$. Observe that $\E(X_i) = 0 $ and that $ \ess\sup
\lvert X_\lambda \rvert <\infty$. Hence the moment generating function of $X_i$ is well-defined for
all $\theta$ and we can apply Cram\'er's theorem.
Let $r< R^{1+\fii(R)}<R<R_0$ and set $k(R)$ and $k(r)$ such that 
\begin{equation*}
  \prod_{i=1}^{k(R)}c(\omega_i)\sim R \quad\text{ and }\quad
  \prod_{i=1}^{k(r)}c(\omega_i) \sim r.
\end{equation*}
Since $\phi(e^{-x})x$ is non-increasing we can, without loss of generality,
take $R_0$ (depending on the realisation) small enough such that Cram\'er's theorem holds for
$k(r) - k(R)$.
Thus, for all $\eps>0$,
\begin{equation*}
  \Prob\left\{\sum_{i=k(R)}^{k(r)} X_i\, \geq \,\eps (k(r)-k(R))\right\}\leq
  e^{-(I(\eps)-\delta)(k(r)-k(R))}
\end{equation*}
Therefore, the probability $P(R)$ that there exists $r< R^{1+\fii(R)}<R<R_0$, given $R$ is bounded by
\begin{equation*}
  P(R)\leq\sum_{l=k(R^{1+\fii(R)})}^\infty \Prob\left\{\sum_{i=k(R)}^{l} X_i\, \geq \,\eps (l-k(R))\right\}\leq
  \sum_{l=k(R^{1+\fii(R)})}^\infty e^{-\tau(l-k(R))}\lesssim
  e^{-\tau(k(R^{1+\fii(R)})-k(R))},
\end{equation*}
where we have written $\tau=I(\eps)-\delta$ to ease notation.
Without loss of generation, using Cram\'er's theorem, we can assume that $R_0$ is also chosen small
enough such that
\begin{equation*}
  \E^g(c(\lambda))^{(1+\delta) (k(R^{1+\fii(R)}) - k(R))} \leq
  \prod_{i=k(R)}^{k(R^{1+\fii(R)})}c(\omega_i)\sim \frac{R^{1+\fii(R)}}{R} = R^{\fii(R)}.
\end{equation*}
Thus, 
\begin{equation*}
k(R^{1+\fii(R)}) - k(R)
\geq 
  -\tau'\, \fii\left(\prod_{i=1}^{k(R)}c(\omega_i)\right)\cdot \log\left(\prod_{i=1}^{k(R)}c(\omega_i)\right)
\end{equation*}
for some $\tau'>0$.
Now, for any given $\omega$, the number of levels such that $\log(R)=n$ is uniformly bounded.
Further, the number of products of $\prod c(\omega_i)$ that are comparable to $e^{-n}$ is uniformly
bounded. Therefore the sum over the probabilities that there exists a ball $B(x,R)$ at level
$k(R)$ such that $X_i$ exceeds the mean by more than $\eps$ is bounded by
\[
  \sum_{\log R = 1}^\infty P(R) \leq C \sum_{n=1} e^{-\tau\tau' \fii(R) \log 1/R} \leq C' \sum_{n=1}
  e^{-\tau\tau' \fii(e^{-n})n}<C'' \sum_{n=1}^\infty e^{-\phi(e^{-n})n}<\infty.
\]
By the Borel-Cantelli lemma this happens only finitely many times, almost surely.
Finally, we can conclude that almost surely for small enough $R$ (depending on the realisation)
there are no pairs
$r<R^{1+\fii(R)}$ such that $\sum_{i=k(R)}^{k(r)}X_i > \eps (k(r)-k(R))$. Then
\[
  \sum_{i=k(R)}^{k(r)}\log N_{\omega_i}c(\omega_i)^s \leq \eps(k(r)-k(R)).
  \]
Observe that the number of $r$ coverings is comparable to the number of descendants of the $B(x,R)$
cylinder. Therefore,
\[
  N_r(B(x,R)\cap F_\omega) \sim \prod_{i=k(R)}^{k(r)}N_{\omega_i} \lesssim
  \prod_{i=k(R)}^{k(r)}
  \frac{e^{\eps}}{c(\omega_i)^s} \lesssim \left( \frac{R}{r}\right)^{s+\eps'} 
\]
for some $\eps'$ such that $\eps'\to 0$ as $\eps\to 0$. Since $\eps$ was arbitrary, we have the
desired conclusion for the first part.

\vskip.5em

We now prove the second half of the theorem. 
Recall that the almost sure Assouad dimension $s_A$ of $F_\omega$ is given by $s_A =
\sup_{\lambda\in\supp\mu}\{- \log N_\lambda / \log c(\lambda)\}$.
Let $\eps>0$ and take 
\[
  T_{\eps}=\left\{ \lambda\in\Lambda\;:\; \frac{-\log N_\lambda }{ \log c(\lambda)}\geq
    s_A-\eps
\right\}\text{ and }p_{\eps}=\mu(T_{\eps}).
\]
Define $c_{\sup} = \sup_{\lambda} c(\lambda)$ and $c_{\inf}=\inf_{\lambda} c(\lambda)$.
Let $\psi(n)=\fii((c_{\sup})^{n})\gamma$, where $\fii$ is given as $\fii(R)=o(\phi(R))$ and
$\gamma=\log c_{\inf}/\log c_{\sup}$. Recall that $\psi$ is 
non-increasing and consider the events 
\[E_n=\{\omega\in\Omega \;:\;\omega_i\in
T_{\eps}\text{ for }n\leq i<n+\psi(n)n\}.\]
Clearly, $\Prob(E_n) = p_{\eps}^{\psi(n)n}$. The event $E_n\cap E_m$ for $n\leq m$ has probability
\begin{equation*}
  \Prob(E_n\cap E_m) = p_{\eps}^{\psi(m)m + m -n}
\end{equation*}
due to the overlap of $[n,\psi(n)n]\cap [m,\psi(m)m]$. The correlation is 
\begin{equation*}
  \mathcal{R}_{n,m} =  \Prob(E_n\cap E_m)  -  \Prob(E_n)\Prob(E_m)
  = p_{\eps}^{\psi(m)+m-n} - p_{\eps}^{\psi(n)n+\psi(m)m}
\end{equation*}
Therefore
\begin{multline*}
  \sum_{N\leq n,m M} \mathcal{R}_{n,m}
  \leq
  2\sum_{m=N}^M \sum_{n=N}^m \mathcal{R}_{n,m}
  \leq 2 \sum_{m=N}^M p_{\eps}^{\psi(m)m}\sum_{n=N}^m p_{\eps}^{m-n}\\\leq 2 \sum_{m=N}^M
  p_{\eps}^{\psi(m)m}\sum_{i=0}^\infty p_{\eps}^i
  \leq C_{\eps} \sum_{m=N}^M p_{\eps}^{\psi(m)m} \leq C_{\eps}\sum_{m=N}^M \Prob(E_m).
\end{multline*}
for all $1\leq N < M <\infty$. To use Theorem~\ref{thm:kleinbock} it remains to check divergence the
latter sum. As we are in the diverging case, $\fii(x)x$ increases as $x$ tends to $0$ and $\fii(x) =
o(\phi(x))$ as $x\to0$. Then,
\begin{align}
  \sum_{n=1}^\infty \Prob(E_m) = \sum_{n=1}^\infty p_{\eps}^{\psi(n)n}
  &= \sum_{n=1}^\infty e^{-\fii((c_{\sup})^n)\gamma n\log(1/p_{\eps})}\nonumber\\
  &=\sum_{n=1}^\infty
  \exp\left(-\fii(e^{-n\log(1/c_{\sup})})n\log(1/c_{\sup})\gamma\frac{\log(1/p_{\eps})}{\log(1/c_{\sup})}\right)\label{eq:fraction}\\
  &\gtrsim \sum_{n=1}^\infty
  \exp\left(-\phi(e^{-n\log(1/c_{\sup})})n\log(1/c_{\sup})\right)\label{eq:littleo}\\
  &\sim\sum_{n=1}^\infty \exp\left(-\phi(e^{-n})n\right) = \infty\label{eq:substitution}
\end{align}
where we have used the integration test and the substitution rule to obtain \eqref{eq:substitution}.
We have obtained \eqref{eq:littleo} by $\fii(x)=o(\phi(x))$ to combat the final
fraction in \eqref{eq:fraction}. However, if $p_{\eps}$ is bounded away from $0$ as $\eps\to 0$ we
can sharpen this to $\fii(x) \leq C\phi(x)$ by taking $\eps=0$ and using the bound on $p_{\eps}$.
This can happen when there exists $\Lambda'\subset\Lambda$ with $\mu(\Lambda')>0$ that maximises
$-\log N_\lambda/\log c(\lambda)$, i.e.\ when $\fS_\lambda^{s_A}=1$ for all $\lambda\in\Lambda'$.

Application of Theorem~\ref{thm:kleinbock} gives us that $\omega \in E_n$ for infinitely
many $n$, almost surely.
That is, given a generic $\omega\in\Omega$, there are infinitely many $n$ such that $\omega_k \in
T_{\eps}$ for $n\leq k \leq \psi(n)n$. Therefore, considering the ball
$f_{\omega\rvert_n}(\Delta)\cap F_\omega$ of diameter $R\sim \prod_{k=1}^{n}c(\omega_k)$, we can use
the fact that the interiors are separated and standard arguments (see e.g.\ \cite[Lemma 3.2]{Troscheit19}) to claim that this
ball must be covered by at least
$C\prod_{k=n}^{(1+\psi(n))n}N_{\omega_k}$ many balls of radius
$r\sim\prod_{k=1}^{(1+\psi(n))n}c(\omega_k)$.
Therefore, there exist $x,r,R$ such that
\begin{equation*}
  N_r(B(x,R)\cap F_\omega) \gtrsim
  \prod_{k=n}^{(1+\psi(n))n}N_{\omega_k}\geq \prod_{k=n}^{(1+\psi(n))n} c(\omega_k)^{-(s-\eps)} \sim
  \left( \frac{R}{r} \right)^{s_A - \eps}.
\end{equation*}
Finally, we check that $r \lesssim R^{1+\fii(R)}$. Equivalently we check whether $r/R \lesssim R^{\fii(R)}$.
\begin{equation*}
  \frac{r}{R} \sim \prod_{k=n}^{(1+\psi(n))n}c(\omega_k)\leq (c_{\sup})^{\psi(n)n}
  =\exp\left(\fii((c_{\sup})^n) n \gamma \log(1/c_{\sup})\right)
  =(c_{\inf})^{\fii((c_{\sup})^n) n}\leq R^{\fii(R)}
\end{equation*}
as required.
Therefore $\dimAf F_\omega \geq s_A - \eps$ almost surely.
Since $\eps>0$ was arbitrary (or can in cases be chosen to be $0$) we obtain the required result.

\subsection{Proof of Theorem~\ref{thm:BedfordMcMullen} Bedford-McMullen carpets}
\begin{proof}
  We define the random variables $k_1^\omega(R), k_2^\omega(R)$ as the levels when the rectangles in
  the construction have base length $R$ and height $R$, respectively. That is,
  \[
    \prod_{i=1}^{k_2^\omega(R)}n^{-1}_{\omega_i} \sim R \quad\text{and}\quad
    \prod_{i=1}^{k_1^\omega(R)}m^{-1}_{\omega_i} \sim R.
  \]
  It follows from the estimates in \cite{FraserTroscheit18} that 
  \begin{equation}\label{eq:JMFSTestimate}
    N_{R^{1+\fii(R)}}(B(x,R)\cap F_\omega) \sim
    \prod_{l=k_2^\omega(R)}^{k_2^\omega(R^{1+\fii(R)})} C_{\omega_l} 
    \prod_{l=k_1^\omega(R)}^{k_1^\omega(R^{1+\fii(R)})} B_{\omega_l}.
  \end{equation}
  Let $X_i = \log C_{\omega_i} - \log\Cbar$ and $Y_i = \log B_{\omega_i} - \log\Bbar$, where
  $\Cbar=\E^g(C_\lambda)$ and $\Bbar = \E^g (C_\lambda)$.
  As in the self-similar case we have $E(X_i) = E(Y_i) =0$ and due to the finiteness of $\Lambda$,
  the moment generating function exists for all $\theta$. Hence we can apply Cram\'er's theorem. Let
  $r < R^{1+\fii(R)} < R < R_0$, where $R_0$ is chosen small enough such that Cram\'er's theorem
  holds for $k_i^\omega(R) - k_i^\omega(r)$, ($i=1,2$).
  Thus, for all $\eps>0$,
\begin{align*}
 & \Prob\left\{\sum_{i=k_2^\omega(R)}^{k_2^\omega(r)} X_i+\sum_{i=k_1^\omega(R)}^{k_1^\omega(r)}
Y_i\, \geq \,\eps (k_2^\omega(r)-k_2^\omega(R)+k_1^\omega(r)-k_1^\omega(R))\right\}\\&\leq
\Prob\left\{\sum_{i=k_2^\omega(R)}^{k_2^\omega(r)} X_i \geq \,\eps (k_2^\omega(r)-k_2^\omega(R))\right\}
+\Prob\left\{\sum_{i=k_1^\omega(R)}^{k_1^\omega(r)}
Y_i\, \geq \,\eps (k_1^\omega(r)-k_1^\omega(R))\right\}.
\end{align*}
Applying Cram\'er's theorem to both probabilities and analogous to the self-similar case, we obtain
that, almost surely, for all $r<R^{1+\fii(R)}$ with $R$ small enough (and depending on the realisation)
that 
\begin{equation*}
\sum_{i=k_2^\omega(R)}^{k_2^\omega(r)} X_i+\sum_{i=k_1^\omega(R)}^{k_1^\omega(r)}
Y_i\, < \,\eps (k_2^\omega(r)-k_2^\omega(R)+k_1^\omega(r)-k_1^\omega(R)).
\end{equation*}
  Thus following the same argument as in \cite{FraserTroscheit18}, and using
  \eqref{eq:JMFSTestimate}, gives
\begin{equation*}
 N_{R^{1+\fii(R)}}(B(x,R)\cap F_\omega) \lesssim
  \Cbar^{(1+\eps')(k_2^\omega(R^{1+\phi(R)})-k_2^\omega(R))}
\Bbar^{(1+\eps')(k_1^\omega(R^{1+\phi(R)})-k_1^\omega(R))}
\lesssim \left( \frac{R}{r} \right)^{(1+\eps'')s_q},
\end{equation*}
where $s_q=\ess\dimqA F_\omega$ and $\eps''\to0$ as $\eps\to 0$. 
As $\eps>0$ was arbitrary we conclude that $\dimAf F_\omega = \dimqA F_\omega$ almost surely.
This concludes the first part.

\vskip.5em
For the second part, recall that the almost sure Assouad dimension $s_A$ of $F_\omega$ is given by 
\[
  s_A = \max_{\lambda\in\Lambda}\frac{\log B_\lambda}{\log 
  m_\lambda}+\max_{\lambda\in\Lambda}\frac{\log C_{\lambda}}{\log n_\lambda}.
\]
Without loss of generality assume the first summand is maximised by $1\in\Lambda$, whereas the
  second is maximised by $2\in\Lambda$ (where we may identify $1\sim 2$ if necessary).
Define $\psi(j) = \fii(n_{\min}^j)\log n_2 / \log n_{\max}$, where $\fii(x) \leq \gamma \phi(x)$
with
\[
  \gamma = \frac{\log n_{\min}\log n_{\max}}{(\log(1/p_2)+\kappa
\log(1/p_1))\log n_{2}}\quad\text{and}\quad\kappa = \log n_2 / \log m_1 . 
\]
  Consider the events
  \begin{multline*}
  E_l = \{ \omega\in \Omega \;:\; \omega_i = 2 \text{ for }l\leq i < \psi(l)l
  \text{ and }\omega_i = 1 \text{ for } l'\leq i < l'+ \kappa \psi(l)l
  \\\text{ where } l'=k_1^\omega(R) \text{ and }R \text{ is the least value satisfying
  }k_2^\omega(R)=l\}.
\end{multline*}
  Almost surely, $k_1^\omega(R) - k_2^\omega(R) \gg \psi(l)l$ for large enough $l$. Therefore
  $E_l$ consists of two fixed strings of letters $2$ and $1$ of lengths $\psi(l)l$ and
$\kappa\psi(l)l$, respectively, that do not overlap.
This further gives $\Prob(E_l) \sim p_2^{\psi(l)l}p_1^{\kappa\psi(l)l}$.
Considering an $\omega\in E_{l}\cap E_j$ for $j\leq l$ there are only two cases that appear (with large
probability):
\begin{itemize}
  \item $[j,j+\psi(j)j]$ intersects $[l,l+\psi(l)l]$ and $[j',j'+\kappa\psi(j)j]$ intersects
    $[l',l'+\kappa\psi(l)l]$.
  \item $[j',j'+\psi(j)j]$ intersects $[l,l+\psi(l)l]$.
\end{itemize}
In the first case the probability is given by 
\begin{equation}\label{eq:probEstimate}
  \Prob(E_j\cap E_l) \sim p_2^{\psi(l)l+l-j} p_1^{\kappa\psi(l)l+l'-j'} \leq
  p_2^{\psi(l)l+l-j}p_1^{\kappa\psi(l)l+\tau(l-j)}\sim \Prob(E_l) p_2^{l-j}p_1^{\tau(l-j)}.
\end{equation}
The estimate that $l-j\lesssim l'-j'$ arises from the observation that the $R_l$ associated with $l$ is related to $R_j$
by $R_l/R_j \leq n_{\min}^{-(l-j)}$ and $R_l/R_j \geq m_{\max}^{-(l'-j')}$. This gives $\tau\geq
\log n_{\min}/\log m_{\max}$.
Note further that the last term in \eqref{eq:probEstimate} implies uniform summability over $1\leq j \leq l$.

The second case can only occur if the maximal letters are identical, that is $1=2$ and $p_1 = p_2$.
This gives 
\[
  \Prob(E_j\cap E_l) \sim p_2^{\kappa\psi(l)l+\psi(l)l+l-j'+\psi(j)j} \sim \Prob(E_l)
  p_2^{l-j'+\psi(j)j},
\]
which is also uniformly summable over $1\leq j \leq l$.

We can conclude that for any $1\leq J\leq l$,
\[
  \sum_{j=J}^{l}\mathcal{R}_{j,l} \leq \sum_{j=J}^l\Prob(E_j\cap E_l) \leq \sum_{j=1}^l \Prob(E_j \cap
  E_l) \lesssim  \Prob(E_l)
\]
and so for $1\leq J\leq L$,
\[
  \sum_{J\leq j,l \leq L}\mathcal{R}_{j,l} \leq 2\sum_{l=J}^L \sum_{j=J}^l\Prob(E_j\cap E_l)
  \lesssim 
  \sum_{l=J}^L \Prob(E_l).
\]

To use Theorem~\ref{thm:kleinbock} we need to show that $\sum \Prob(E_j) = \infty$.
This is similar to proving divergence in Theorem~\ref{thm:SelfSim}:
\begin{align*}
  \sum_{j=1}^\infty \Prob(E_j) \sim \sum_{j=1}^{\infty} p_2^{\psi(l)l}p_1^{\kappa\psi(l)l}
  &=\sum_{j=1}^{\infty} \exp(-(\log(1/p_2)+\kappa \log(1/p_1))\psi(l)l) \\
  &=\sum_{j=1}^{\infty} \exp(-\phi((n_{\min})^{-j})j \log n_{\min} )\\
  &\sim \sum_{j=1}^{\infty} \exp(-\phi(e^{-j})j )
  =\infty.
\end{align*}
Hence the conditions of Theorem~\ref{thm:kleinbock} are satisfied and $E_l$ happens infinitely
often.

As $\omega\in E_j$ for infinitely many $j$, almost surely, we have $k_2^\omega(R)=j$ and
$k_1^\omega(R)=j'>(1+\psi(j)j$ for arbitrarily large $j$. Then, by the definition of $k_2$,
and $\psi$,
we have $R^{1+\fii(R)} \geq R n_2^{-\psi(j)j}$ and $R^{\fii(R)} \geq n_2^{-\psi(j)j}$. Similarly, 
$R^{\fii(R)} \geq m_1^{-\kappa\psi(j)j}= n_2^{-\psi(j)j}$ and thus by the estimate
\eqref{eq:JMFSTestimate}, 
\begin{align*}
  N_{R^{1+\fii(R)}}(B(x,R)\cap F_\omega) &\sim
  \prod_{l=k_2^\omega(R)}^{k_2^\omega(R^{1+\fii(R)})} C_{\omega_l} 
  \prod_{l=k_1^\omega(R)}^{k_1^\omega(R^{1+\fii(R)})} B_{\omega_l}\\
  &=C_2^{k_2^\omega(R^{1+\fii(R)})-k_2^\omega(R)}B_1^{k_1^\omega(R^{1+\fii(R)})-k_1^\omega(R)}\\
  & = R^{-\fii(R)(\log C_2/\log n_2+\log B_1/\log m_1)}
\end{align*}
for infinitely many pairs $(x_i, R_i)$ almost surely.
Therefore the almost sure generalised Assouad spectrum with respect to $\fii$ is equal to the
almost sure Assouad dimension.
\end{proof}

\subsection*{Acknowledgements}
Part of this work was started when the author visited Acadia University in October 2018. ST
thanks everyone at Acadia and Wolfville, Nova Scotia for the pleasant stay. 
The author further thanks Jayadev Athreya, Jonathan M.~Fraser, Kathryn Hare, Franklin Mendivil, and
Mike Todd for many fruitful discussions.
The author does not thank that one driver in Wolfville who nearly
killed him at a pedestrian crossing and then swore at him for having the right of way.

\end{document}